\newtheorem{theorem}{Theorem}[section]
\newtheorem{lemma}[theorem]{Lemma}
\newtheorem*{Acknowledgement}{\textnormal{\textbf{Acknowledgement}}}
\theoremstyle{definition}
\newtheorem{definition}[theorem]{Definition}
\newtheorem{example}[theorem]{Example}
\newtheorem{corollary}[theorem]{Corollary}
\newtheorem{proposition}[theorem]{Proposition}
\newtheorem{remark}[theorem]{Remark}
\numberwithin{equation}{section}
\newcommand{\beqa}{\begin{eqnarray*}}
\newcommand{\eeqa}{\end{eqnarray*}}
\newcommand{\beqn}{\begin{eqnarray}}
\newcommand{\eeqn}{\end{eqnarray}}
\newcommand{\ov}{\overline}
\newcommand{\ci}{\subseteq}
\renewcommand{\a}{\alpha}
\newcommand{\e}{\varepsilon}
\newcommand{\la}{\lambda}
\newcounter{cnt1}
\newcounter{cnt2}
\newcounter{cnt3}
\newcommand{\blr}{\begin{list}{$($\roman{cnt1}$)$}
        {\usecounter{cnt1} \setlength{\topsep}{0pt}
                \setlength{\itemsep}{0pt}}}
\newcommand{\bla}{\begin{list}{$($\alph{cnt2}$)$}
        {\usecounter{cnt2} \setlength{\topsep}{0pt}
                \setlength{\itemsep}{0pt}}}
\newcommand{\bln}{\begin{list}{$($\arabic{cnt3}$)$}
        {\usecounter{cnt3} \setlength{\topsep}{0pt}
                \setlength{\itemsep}{0pt}}}
\newcommand{\el}{\end{list}}
\newtheorem{thm}{Theorem}
\newtheorem{Def}[thm]{Definition}
\newtheorem{rem}[thm]{Remark}
\newcommand{\Rem}{\begin{rem} \rm}
\newcommand{\bdfn}{\begin{Def} \rm}
\newcommand{\edfn}{\end{Def}}
\title{Small Combination of Slices, Dentability and Stability Results Of  Small Diameter  Properties In Banach Spaces}
\author[ S. Basu , S. Seal ]
	{Sudeshna Basu$^{1}$, Susmita Seal$ ^{2}$ }
\address{{$^{1}$}   Sudeshna Basu,
	Department of Mathematics,
	George Washington University,
	Washington DC 20052 USA  and \\
		Department of Mathematics, 
		Ram Krishna Mission Vivekananda Education and Research Institute , 
		Belur Math,  Howrah 711202
		West Bengal, India }
	\email{sudeshnamelody@gmail.com}
\address {{$^{2}$} Susmita Seal, 
		Department of Mathematics, ,
		Ram Krishna Mission Vivekananda Education and Research Institute , 
		Belur Math,  Howrah 711202,
		West Bengal, India}
	\email{susmitaseal1996@gmail.com}
\subjclass{46B20, 46B28}
\keywords{Slices, Huskable, Denting , Dentable , Small Combination of Slices.}
\date{}
\begin{document}
\maketitle
\begin{abstract}
	In this work we study three different versions of small diameter properties of the unit ball in a Banach space and its dual. The related concepts for all closed bounded convex sets of a Banach space was initiated and developed in \cite{B3}, \cite{BR} ,\cite{EW}, \cite{GM} was extensively studied  in the context of dentability, huskability, Radon Nikodym Property and Krein Milman Property  in \cite{GGMS}. We introduce the the Ball Huskable Property ($BHP$), namely, the unit ball has  relatively weakly open subsets of arbitrarily small diameter. We compare this property to two related properties, $BSCSP$ namely, the unit ball has  convex combination of slices of arbitrarily small diameter and $BDP$ namely, the closed unit ball has slices of arbitrarily small diameter.  We  show  $BDP$ implies  $BHP$ which in turn implies  $BSCSP$ and none of the implications can be reversed. We prove similar results for  the $w^*$-versions. We prove that all these properties are stable under $l_p$ sum for $1\leq p \leq \infty, c_0$ sum and Lebesgue Bochner spaces. Finally, we explore the stability of these with properties in the light of three space property. We  show that $BHP$ is a three space property provided $X/Y$ is finite dimensional and same is true for $BSCSP$ when $X$ has $BSCSP$ and $X/Y$ is strongly regular (\cite{GGMS}).
	% These stability results lead to a discussion in the context of ideals of Banach spaces. We  prove that $BSCSP$ (respectively $BHP$, $BDP$) can be lifted from an M-Ideal to the whole space. We also show similar results for strict ideals. We note that the space $C(K,X)^*$ has $w^*$-$BSCSP$ (respectively $w^*$-$BHP$, $w^*$-$BDP$) when K is dispersed and $X^*$has the $w^*$-$BSCSP$ (respectively $w^*$-$BHP$, $w^*$-$BDP$). 
\end{abstract}

\section{Introduction}

Let $X$ be a {\it real} 
%infinite dimensional
nontrivial  Banach space and $X^*$ its dual. We will denote by $B_X$, $S_X$ and $B_X(x, r)$ the closed unit ball, the unit sphere and the closed ball of radius $r >0$ and center $x$. We refer to the monograph \cite{B1} for notions of convexity theory that we will be using here.

\bdfn
\blr
\item We say $A \ci B_{X^*}$ is a norming set for $X$ if $\|x\| =
\sup\{|x^*(x)| : x^* \in A\}$, for all $x \in X$. A closed subspace $F
\ci X^*$ is a norming subspace if $B_F$ is a norming set for $X$.
\item Let $x^* \in X^*$, $\a > 0$ and bounded set $ C \subseteq X$.
Then the set $S(C, x^*, \a) = \{x \in C : x^*(x) > \mbox{sup}~ x^*(C) - \a \}$ is called the  
slice determined by $x^*$ and $\a.$   
For any slice we assume without loss of generality that $\|x^*\| = 1$ , because 
 if $x^*\neq 0$ then $S(C,x^*,\alpha)=S(C,\frac{x^*}{\|x^*\|} ,\frac{\alpha}{\|x^*\|}).$ 
 %and\\
%if $x^*=0$ then $S(C,x^*,\alpha)= B =S(C,x_1^*,\beta)$ where $x_1^*\in S_{X^*}$ , $\beta > \sup_{y\in C} x_1^*(y)+ \sup_{y\in C} \|y\|$
One can analogously define $w^*$ slices in $X^*$ by choosing the functional from the predual.

\item
A point $x$
%$x \neq 0$
 in a bounded set $C \ci X$ is called a denting point 
point of $C,$ if for every $\e > 0$, there exists a slice
$S$ of $C$, such that $x \in S$ and $dia(S) <\e.$ One can analogously define $w^*$-denting point of a bounded set in $X^*$ by considering $w^*$-slices.

%\vspace{1 cm}
\item A point $x$
%$x \neq 0$ 
in a bounded convex set $C \ci X$ is called a small combination of slices (SCS)
point of $C$, if for every $\e > 0$, there exists a convex combination of slices ,
%$S_i$ of $K$, and a {\em convex} combination 
$S = \sum_{i=1}^{n} \la_i S_i$ of $C$ (where $ 0 \leq \lambda_i \leq 1 $ and $\sum_{i=1}^{n} \la_i = 1$) such that $x \in S$ and $dia(S) <\e.$ One can analogously define $w^*$-SCS point of a bounded set in $X^*$ by considering $w^*$-slices.
\el
\edfn

 We recall the following two definitions from \cite{BR} and \cite{B2} .

\begin{definition} \label{def bdp} A Banach space $X$ has 
	
	\begin{enumerate}
		\item { \it Ball Dentable Property} ($BDP$) if $B_X$
		%the closed unit ball 
		has a slice of arbitrarily small diameter. 
		\item {\it Ball Small Combination of Slice Property} ($BSCSP$) if $B_X$
		%the unit ball 
		has a convex combination of slices of arbitrarily small diameter.
	\end{enumerate} 
	
\end{definition}
We now define,

\begin{definition}
	A Banach space $X$ has {\it  Ball Huskable Property}($BHP$) if $B_X$ %the unit ball 
	has a relatively weakly open subset of arbitrarily small diameter.
\end{definition}
\begin{remark}
Analogously we can define $w^{*}$-$BSCSP$, $w^{*}$-$BHP$ and $w^*$-$BDP$ in a dual space by considering $w^*$-SCS, $w^*$ open sets and  $w^*$- slices of $B_{X^*}$ respectively.
\end{remark}
Observe that for a Banach space , $BDP$ always implies $BHP$, in fact, any slice of the unit ball is relatively weakly open. Also $BHP$ implies $BSCSP$,
by Bourgain's Lemma (see\cite{B3}, \cite{GGMS}), which says that every non-empty relatively weakly open subset of $B_X$ contains a finite convex combination of slices . Similar observations are true for $w^*$-versions. Since every $w^*$-slice ($w^*$-open set) of $B_{X^*}$ is also a slice (weakly open set) of $B_{X^*}$, so we have the following diagram :
$$ BDP \Longrightarrow \quad BHP \Longrightarrow \quad  BSCSP$$ $\quad \quad \quad \quad \quad \quad \quad \quad\quad\quad\quad\quad\quad\quad\quad\quad \Big \Uparrow \quad \quad\quad\quad\quad \Big \Uparrow \quad \quad\quad\quad\quad \Big \Uparrow$  $$ w^*BDP \Longrightarrow  w^*BHP \Longrightarrow  w^*BSCSP$$
%$$  diagram  -\quad 1$$
In general, none of the reverse implications of the diagram hold, which we will discuss later.
 
 SCS points were first introduced in \cite{GGMS} as a “slice generalization” of denting points as well as  the
 notion PC (i.e. points for which the identity mapping on the unit ball, from
 weak topology to norm topology is continuous)and subsequently analyzed in detail in \cite{R} and \cite{S}. It is well known that  $X$ has  Radon Nikodym Property ($RNP$) if and only if every closed bounded and convex subset of $X$ has slices with arbitrarily small diameter.  $X$ has the Point of Continuity Property ($PCP$) if every closed bounded and convex subset of $X$  has relatively weakly open subsets with arbitrarily small diameter. $X$ is said to be Strongly Regular ($SR$) if every closed, convex and bounded subset of $X$ has convex combination of slices with  arbitrarily small diameter. For more details, see \cite{B3}, \cite{GGMS} and \cite{GMS}. It is clear then that $RNP$ implies $PCP$ and $PCP$ implies $SR$. It is also well known that none of these implications can be reversed. Clearly, $RNP$ implies $BDP$, $PCP$ implies $BHP$ and $SR$ implies $BSCSP$. The diagram below gives a clear picture.
 $$ RNP \Longrightarrow \quad PCP \Longrightarrow \quad  SR$$ $\quad \quad \quad \quad \quad \quad \quad \quad\quad\quad\quad\quad\quad\quad\quad\quad \Big \Downarrow \quad \quad\quad\quad\quad \Big \Downarrow \quad \quad\quad\quad\quad \Big \Downarrow$  $$ BDP \Longrightarrow  BHP \Longrightarrow  BSCSP$$
 %$$  diagram  -\quad 2$$ 
  It was proved in \cite{GGMS} that $X$ is strongly regular if and only if every nonempty bounded convex set $K$ in $X$ is contained in norm closure of $SCS(K)$ i.e. $SCS$ points of $K$. Later it was proved in \cite{S} that a Banach space has $RNP$ if and only if it is SR and has the Krein Milman Property (KMP), i.e. every closed bounded convex subset K of $X$ is the norm-closed convex hull of its extreme points. All the three properties discussed in this paper in a way, are ''localised"(to the closed unit ball) versions of the three geometric properties $RNP, PCP$ and $SR$.

    In this work we introduce Ball Huskable Property ($BHP$) and explore its relation with $BSCSP$ and $BDP$. We observe that $BDP$ implies $BHP$ which in turn implies $BSCSP$ and none of the implications can be reversed. We prove certain stability results for $BDP$ , $BHP$ and  $BSCSP$. We further explore these properties in the context of three space property.The spaces that we will be considering have been well studied in literature. A large class of function spaces like the Bloch spaces, Lorentz and Orlicz spaces, spaces of vector valued functions and spaces of compact operators are examples of the spaces we will be considering, for details, see \cite{HWW} 
    
 \section{Stability Results}
 The following result will  be useful in our discussion.

\begin{proposition} \label{A1}
 A Banach space $X$ has $BDP$ (resp. $BHP$ , $BSCSP$) if and only if $X^{**}$ has $w^*$-$BDP$ (resp. $w^*$-$BHP$, $w^*$-$BSCSP$). 
\end{proposition}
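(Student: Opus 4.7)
The plan is to exploit Goldstine's theorem ($B_X$ is $w^*$-dense in $B_{X^{**}}$) together with the canonical embedding $X \ci X^{**}$ to set up, for each of the three properties, a diameter-preserving correspondence between the ``small'' objects in $B_X$ and their $w^*$-analogues in $B_{X^{**}}$. The forward implications will lift a small-diameter object in $B_X$ to a $w^*$-object in $B_{X^{**}}$ of the same diameter; the reverse implications will restrict a $w^*$-object back to $B_X$, a step which can only shrink diameters and which produces a nonempty restriction again by Goldstine.

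The key preliminary step is an elementary diameter lemma: for any $A \ci B_{X^{**}}$, one has $dia(\ov{A}^{w^*}) = dia(A)$. Given $x^{**}, y^{**} \in \ov{A}^{w^*}$ and $f \in B_{X^*}$, the $w^*$-continuity of evaluation at $f$ allows one to approximate $f(x^{**})$ and $f(y^{**})$ by $f(a), f(b)$ with $a, b \in A$, yielding $|f(x^{**}) - f(y^{**})| \leq dia(A)$; taking the supremum over $f \in B_{X^*}$ recovers $\|x^{**} - y^{**}\|$ and gives the claim.

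With this lemma, $BDP$ is quick: since $\sup x^*(B_X) = \|x^*\| = \sup x^*(B_{X^{**}})$, the inclusion $S(B_X, x^*, \a) \ci S(B_{X^{**}}, x^*, \a)$ combined with Goldstine makes $S(B_X, x^*, \a)$ nonempty and $w^*$-dense in $S(B_{X^{**}}, x^*, \a)$, whence equal diameters. For $BHP$, a small-diameter relatively weakly open subset of $B_X$ may harmlessly be replaced by a basic one $U \cap B_X$ cut out by finitely many $x_1^*, \ld, x_k^* \in X^*$. The same inequalities read in $X^{**}$ define a basic relatively $w^*$-open set $V \cap B_{X^{**}}$ with $V \cap B_X = U \cap B_X$; Goldstine makes the latter $w^*$-dense in the former and the lemma supplies the diameter equality.

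The main technical point occurs in the $BSCSP$ case: one must verify that $\sum_{i=1}^n \la_i S(B_X, x_i^*, \a_i)$ is $w^*$-dense in $\sum_{i=1}^n \la_i S(B_{X^{**}}, x_i^*, \a_i)$. Given $y^{**} = \sum \la_i x_i^{**}$ with each $x_i^{**}$ in the corresponding $w^*$-slice, and a basic $w^*$-neighbourhood of $y^{**}$ determined by $f_1, \ld, f_m \in X^*$, the idea is to apply Goldstine to each $x_i^{**}$ separately with the enlarged test family $\{f_1, \ld, f_m, x_i^*\}$. The strict inequality $x_i^{**}(x_i^*) > \|x_i^*\| - \a_i$ provides the slack ensuring that each chosen $x_i \in B_X$ actually lies in $S(B_X, x_i^*, \a_i)$, and $\sum \la_i x_i$ then $w^*$-approximates $y^{**}$ inside the desired sum of slices; the diameter lemma concludes. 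In all three properties the reverse implications are essentially automatic, since for any $w^*$-open $V \ci X^{**}$ the set $V \cap B_X$ is relatively weakly open, nonempty by Goldstine whenever $V \cap B_{X^{**}}$ is, and of no greater diameter than $V \cap B_{X^{**}}$.
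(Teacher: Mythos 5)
Your proposal is correct and follows essentially the same route as the paper: use Goldstine's theorem to show that the small-diameter object in $B_X$ is $w^*$-dense in its counterpart in $B_{X^{**}}$, and conclude via the fact that $w^*$-closure does not increase diameter (your evaluation-at-$f\in B_{X^*}$ lemma is just a repackaging of the paper's $w^*$-lower semicontinuity of the norm along nets). The paper only writes out the $BDP$ case and declares the other two ``similar''; your treatment of the $BSCSP$ density step, applying Goldstine to each $x_i^{**}$ separately with the enlarged test family, correctly supplies the detail that makes that claim work.
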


\begin{proof}
Suppose $X$ has  $BDP.$
  Let $\varepsilon >0.$ Then there exists a slice $S(B_X, x^*, \alpha)$ of $B_X$ with diameter less than $\frac{\varepsilon}{2}.$\\
Claim:~ $S(B_X, x^*, \alpha)$ is $w^*$ dense in the $w^*$-slice $S(B_{X^{**}}, x^*, \alpha)$ of $B_{X^{**}}.$\\
Indeed, fix $x^{**}$ $\in S(B_{X^{**}}, x^*, \alpha)$. By Goldstine's Theorem  , there is a net $(x_\beta)$ in $B_X$ which converges to $x^{**}$ in the $w^*$-topology. Since, 
$$\displaystyle{\lim_{\beta}}\quad x^*(x_\beta) =  x^{**} (x^*) > 1-  \alpha$$   
 So, there exists $\beta_0$ such that $(x_\beta) \in S(B_X, x^*, \alpha)$ for all $\beta \geqslant \beta_0$. Hence the claim.
 
 Now let $x^{**}$ , $\tilde{x}^{**}$ $\in S(B_{X^{**}}, x^*, \alpha)$. Then there exist net $(x_\beta)$ and $(\tilde{x}_\beta)$ in $S(B_X, x^*, \alpha)$ such that $(x_\beta - \tilde{x}_\beta)$ converges to $x^{**} -\tilde{x}^{**}$ in the  $w^*$ -topology. So, 
$$\Vert x^{**} -\tilde{x}^{**} \Vert\leqslant \displaystyle{\liminf_{\beta}}\quad \Vert x_\beta - \tilde{x}_\beta \Vert\leqslant \frac{\varepsilon}{2} < \varepsilon.$$ 
Thus dia $S(B_{X^{**}}, x^*, \alpha) < \varepsilon$ . Hence $X^{**}$ has $w^*$-BDP.

 Conversely, if $X^{**}$ has $w^*$-$BDP$, it immediately follows that $X$ has $BDP.$
 
 The proofs for $BHP$ and $BSCSP$ follow similarly.
\end{proof}

We immediately have, 
\begin{corollary}
If X has $BDP$ (resp. $BHP$ , $BSCSP$) then $X^{**}$ has  $BDP$ (resp. $BHP$, $BSCSP$) 
\end{corollary}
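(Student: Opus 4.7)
The plan is to chain the preceding proposition with the trivial implications from the $w^*$-versions of each property to their (weak) counterparts, as indicated by the vertical arrows in the diagram in the introduction. Concretely, if $X$ has $BDP$ (respectively $BHP$, $BSCSP$), then by Proposition \ref{A1}, $X^{**}$ has $w^*$-$BDP$ (respectively $w^*$-$BHP$, $w^*$-$BSCSP$).

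Next, I would observe that in any dual space $Z^{*}$, every $w^{*}$-slice of $B_{Z^{*}}$ is in particular a (weak) slice, since the defining functional from the predual $Z$ belongs to $Z^{**}$. Similarly, every $w^{*}$-open subset of $B_{Z^{*}}$ is weakly open, and every convex combination of $w^{*}$-slices of $B_{Z^{*}}$ is a convex combination of (weak) slices. Applying this with $Z^{*} = X^{**}$ (so that the predual is $X^{*}$), any $w^{*}$-slice (respectively $w^{*}$-open set, convex combination of $w^{*}$-slices) of $B_{X^{**}}$ with diameter less than $\varepsilon$ yields a slice (respectively weakly open set, convex combination of slices) of $B_{X^{**}}$ with the same diameter bound. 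Hence the $w^{*}$-version of each property in $X^{**}$ forces the (weak) version.

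There is no real obstacle here: the corollary is essentially a restatement of the forward direction of Proposition \ref{A1} combined with the trivial passage from $w^*$-notions to their weak counterparts in a dual space. The proof is one or two lines.
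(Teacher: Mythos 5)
Your proposal is correct and matches the paper's intended argument exactly: the paper states the corollary follows immediately from Proposition \ref{A1} together with the observation (already noted in the introduction's diagram) that every $w^*$-slice, $w^*$-open set, or convex combination of $w^*$-slices of the ball of a dual space is in particular a slice, weakly open set, or convex combination of slices. Nothing further is needed.
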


 The following lemma will be useful.
 
\begin{lemma} \label{p sum lem}
 Let $Z=X\oplus_pY$ , $1\leqslant p < \infty$ , For any $\varepsilon > 0$ and  for any slice there exists a slice $S(B_Z, z^*, \mu)$ of $B_Z$ such that $ S(B_Z, z^*, \mu) \subset S(B_X, x^*, \alpha) \times \varepsilon B_Y.$  Similarly, there exists a slice $S(B_Z, z^*, \gamma)$ of $B_Z$ such that $ S(B_Z, z^*, \gamma) \subset  \varepsilon B_X \times S(B_Y, y^*, \alpha).$
\end{lemma}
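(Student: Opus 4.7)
The plan is to lift the functional $x^*$ to a norm-one functional on $Z$ and then shrink the slice parameter until the $Y$-coordinate is forced to be tiny. Concretely, I would define $z^*\in Z^*$ by $z^*(x,y):=x^*(x)$. Since $|x^*(x)|\leq\|x\|\leq\|(x,y)\|_p$, we have $\|z^*\|_{Z^*}\leq 1$, with equality attained on pairs $(x,0)$, so $\sup_{B_Z}z^*=\|x^*\|=1$. The slice then unfolds as
\[
S(B_Z,z^*,\mu)=\{(x,y)\in B_Z : x^*(x)>1-\mu\}.
\]

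The key estimate is elementary. For $(x,y)$ in this slice, $\|x\|\geq x^*(x)>1-\mu$ yields $\|x\|^p>(1-\mu)^p$, and combining with the $\oplus_p$ constraint $\|x\|^p+\|y\|^p\leq 1$ gives $\|y\|^p<1-(1-\mu)^p$. Because $\mu\mapsto 1-(1-\mu)^p$ is continuous and vanishes at $\mu=0$ for every $p\in[1,\infty)$, I can pick $\mu\in(0,\alpha]$ small enough that also $1-(1-\mu)^p<\varepsilon^p$. (When $p=1$ the expression reduces to $\mu$, so $\mu<\min(\alpha,\varepsilon)$ suffices.) For such $\mu$, the condition $\mu\leq\alpha$ together with $x\in B_X$ places $x$ in $S(B_X,x^*,\alpha)$, while $\|y\|<\varepsilon$ places $y$ in $\varepsilon B_Y$, yielding the desired inclusion $S(B_Z,z^*,\mu)\subset S(B_X,x^*,\alpha)\times\varepsilon B_Y$.

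The second assertion follows by the symmetric choice $z^*(x,y):=y^*(y)$, interchanging the roles of $X$ and $Y$ throughout. I do not anticipate any substantive obstacle: the argument is a direct arithmetic consequence of the $p$-norm constraint, and the only mild care needed is to calibrate $\mu$ slightly differently for $p=1$ versus $1<p<\infty$, as noted above.
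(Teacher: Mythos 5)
Your proposal is correct and follows essentially the same route as the paper: the functional $z^*=(x^*,0)$, the estimate $\|y\|^p<1-(1-\mu)^p$ from the $\oplus_p$ constraint, and the choice of $\mu$ small enough that this bound is below $\varepsilon^p$ are all exactly the paper's argument. No substantive differences.
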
 

\begin{proof}
Let $S(B_X, x^*, \alpha)$  be any slice of $B_X.$  Also let $\varepsilon > 0.$
 Put $z^* = (x^*,0) \in S_{Z^*}$ . Choose $0< \mu < \alpha$ such that $[1-(1-\mu)^p]^{1/p} < \varepsilon.$  Now, consider a slice of $B_Z$ as , $S(B_Z,z^*,\mu) = \{ z\in B_Z: z^*(z) > 1-\mu \} = \{ z\in B_Z: x^*(x) > 1-\mu \}.$  Then $ S(B_Z,z^*,\mu) \subset S(B_X,x^*,\alpha) \times \varepsilon B_Y$. Indeed, let $z\in S(B_Z,z^*,\mu).$  Then,
 $$1\geqslant \Vert z \Vert^p = \Vert x \Vert^p + \Vert y \Vert^p > (1- \mu)^p + \Vert y \Vert^p$$  Thus, $ \Vert y \Vert^p < 1- (1- \mu)^p$ and so $ \Vert y \Vert < [1- (1- \mu)^p]^{1/p} < \varepsilon$. Also since, $0< \mu < \alpha$, it follows that, $x\in S(B_X,x^*,\alpha)$ . Hence, $z=(x,y)\in S(B_X, x^*, \alpha) \times \varepsilon B_Y.$ The other proof is similar.
\end{proof}

\begin{proposition} \label{BDP p sum}
 Let $X$ and $Y$ be two Banach spaces and $Z=X\oplus_p Y$ , $1\leqslant p < \infty.$  Then $Z$ has $BDP$ if and only if $X$ or $Y$ has $BDP.$
\end{proposition}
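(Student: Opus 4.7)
The easy direction ($X$ or $Y$ has $BDP$ implies $Z$ has $BDP$) follows directly from Lemma \ref{p sum lem}. Assuming $X$ has $BDP$, given $\varepsilon > 0$ pick a slice of $B_X$ of sufficiently small diameter and apply Lemma \ref{p sum lem} with a sufficiently small auxiliary parameter $\varepsilon'$; the $\ell_p$-diameter of the enclosing product $S(B_X, x^*, \alpha) \times \varepsilon' B_Y$ is easily bounded below $\varepsilon$, and the slice of $B_Z$ produced by the lemma is contained in that product.

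For the nontrivial direction, suppose $Z$ has $BDP$, fix $\varepsilon > 0$, and take a slice $S(B_Z, z^*, \alpha)$ of diameter less than $\varepsilon$ with $z^* = (x^*, y^*)$ and $\|z^*\|_{Z^*} = 1$. Set $a = \|x^*\|$ and $b = \|y^*\|$, so that $a^q + b^q = 1$ (with $1/p + 1/q = 1$) when $p > 1$, and $\max(a,b) = 1$ when $p = 1$. Without loss of generality $a \geq b$; in particular $a \geq 2^{-1/q}$, and $a = 1$ when $p = 1$.

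The heart of the argument is to freeze the $Y$-coordinate of the slice at a well-chosen $y_0 \in Y$ and extract a slice of $B_X$ from the horizontal fiber $F := \{x \in X : (x, y_0) \in S(B_Z, z^*, \alpha)\}$. For $p = 1$ one takes $y_0 = 0$, in which case $F = S(B_X, x^*, \alpha)$ is already a slice of $B_X$ of diameter at most $\varepsilon$. For $p > 1$, the correct choice is $y_0 = b^{q/p} u$, where $u \in B_Y$ is picked with $y^*(u) > b - \delta$ for $\delta > 0$ small. A direct optimization of $t \mapsto a(1-t^p)^{1/p} + bt$ on $[0,1]$ (the maximum value $1$ is attained at $t = b^{q/p}$) yields $r := (1 - \|y_0\|^p)^{1/p} = a^{q/p}$ and $ar + y^*(y_0) > 1 - \alpha$. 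Consequently $F$ is nonempty, and $r^{-1} F$ is a nonempty slice of $B_X$ determined by $x^*/a$; its diameter is at most $\varepsilon/r \leq 2^{1/p}\varepsilon$, because any two points of $F$ lift to points of $S(B_Z, z^*, \alpha)$ sharing the common $Y$-component $y_0$, and the difference in $Z$-norm coincides with the difference in $X$-norm.

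Thus for every $\varepsilon > 0$, at least one of $B_X$ or $B_Y$ contains a slice of diameter at most $2^{1/p}\varepsilon$. A pigeonhole argument along a null sequence $\varepsilon_n \to 0$ then forces one of $X, Y$ to have $BDP$. The main obstacle is identifying the correct optimizing $y_0$ for $p > 1$ and checking that the rescaled fiber is indeed a genuine slice of $B_X$ of controlled diameter; the case $p = 1$ is much simpler since $a \geq b$ already forces $a = 1$ there.
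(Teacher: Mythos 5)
Your proposal is correct, and the easy direction (via Lemma \ref{p sum lem}) coincides with the paper's. For the nontrivial direction, however, you take a genuinely different route. The paper argues by contradiction: assuming every slice of $B_X$ and of $B_Y$ has diameter $>\varepsilon$, it normalizes a point $z_0=(x_0,y_0)$ of the small slice $S(B_Z,z^*,\alpha)$ to norm one, picks far-apart pairs $x,\tilde x$ and $y,\tilde y$ in the slices $S(B_X,x^*/\|x^*\|,\alpha/2)$ and $S(B_Y,y^*/\|y^*\|,\alpha/2)$, and checks that $(\|x_0\|x,\|y_0\|y)$ and $(\|x_0\|\tilde x,\|y_0\|\tilde y)$ both lie in $S(B_Z,z^*,\alpha)$ yet are at distance $>\varepsilon$; the normalization $\|x_0\|^p+\|y_0\|^p=1$ plays exactly the role that your explicit optimization of $t\mapsto a(1-t^p)^{1/p}+bt$ plays, but no duality computation is needed. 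Your argument is direct rather than by contradiction: you use $\|(x^*,y^*)\|=(a^q+b^q)^{1/q}$, freeze the $Y$-coordinate at the optimizer $y_0=b^{q/p}u$, and exhibit the rescaled fiber $r^{-1}F$ as an honest slice of $B_X$ of diameter at most $2^{1/p}\varepsilon$. This buys a quantitative transfer statement (a slice of $B_Z$ of diameter $\varepsilon$ yields a slice of $B_X$ or $B_Y$ of diameter at most $2^{1/p}\varepsilon$), at the cost of the duality identification and the fiber computation; it also forces the final pigeonhole step along $\varepsilon_n\to 0$, since which coordinate receives the small slice may depend on $\varepsilon$ — a step the paper's contrapositive formulation avoids. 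Two harmless imprecisions: $r=(1-\|y_0\|^p)^{1/p}$ equals $a^{q/p}$ only if $u$ is chosen in $S_Y$ (otherwise $r\geq a^{q/p}$, which is all you use), and you should record that nonemptiness of $F$ requires $\delta<\alpha$ in the choice of $u$; both are trivially repaired.
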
 

\begin{proof}
Suppose $Z$ has $BDP.$  We prove by contradiction. If possible, let $X$ and $Y$ do not have $BDP.$  Then there exists $\varepsilon > 0$ such that every slice of $B_X$ and $B_Y$ has diameter greater than $\varepsilon.$  Since $Z$ has $BDP$, there exists a slice $S(B_Z, z^* ,\alpha)$ of $B_Z$ with diameter less than $\varepsilon.$\\
% where $z^*=(x^*, y^*) \in S_{Z^*} = S_{X^*\oplus_qY^*}$ [$\frac{1}{p}+\frac{1}{q}=1$ if $p>1$; $q=\infty$ if p=1] and $\alpha > 0$.\\
Case~1 : $x^*=0$ or $y^*=0$ \\
Without loss of generality, let $y^*= 0.$ Then $x^*\in S_{X^*}.$ Then $S(B_X, x^*, \alpha) \times \{0\}\ \subset S(B_Z, z^*,\alpha)$. 
Thus, 
$$dia S(B_X,x^*,\alpha) = dia(S(B_X,x^*,\alpha) \times \{0\}) \leqslant dia S(B_Z,z^*,\alpha) < \varepsilon, $$
a contradiction.\\

Case-2 : $x^* \neq 0$ and $y^* \neq 0.$ 
Choose $z_0=(x_0,y_0) \in S(B_Z,z^*,\frac{\alpha}{4})$ with $\Vert z_0 \Vert = 1$. Now, $S(B_X,\frac{x^*}{\Vert x^* \Vert},\frac{\alpha}{2})$ and $S(B_Y,\frac{y^*}{\Vert y^* \Vert},\frac{\alpha}{2})$ are slices of $B_X$ and $B_Y$ respectively. Hence, $dia S(B_X,\frac{x^*}{\Vert x^* \Vert},\frac{\alpha}{2}) >\varepsilon$ and $dia S(B_Y,\frac{y^*}{\Vert y^* \Vert},\frac{\alpha}{2})>\varepsilon$. There exists $x,\tilde{x} \in S(B_X,\frac{x^*}{\Vert x^* \Vert},\frac{\alpha}{2})$ and $y,\tilde{y} \in S(B_Y,\frac{y^*}{\Vert y^* \Vert},\frac{\alpha}{2})$  such that $\Vert x - \tilde{x} \Vert > \varepsilon$ and   $\Vert y - \tilde{y} \Vert > \varepsilon.$  \\
Let  $z=(\Vert x_0\Vert x , \Vert y_0\Vert y)$ and $\tilde{z}=(\Vert x_0\Vert \tilde{x} , \Vert y_0\Vert \tilde{y}).$ Clearly, $ z, \tilde{z} \in S(B_Z,z^*,\alpha).$ 
 %Indeed, $\Vert z \Vert^p = \Vert x_0 \Vert^p \Vert x \Vert^p + \Vert y_0 \Vert^p \Vert y \Vert^p \leqslant \Vert x_0 \Vert^p + \Vert y_0 \Vert^p = 1$ ,  and \\
 %$z^*(z)= \Vert x_0 \Vert x^*(x) + \Vert y_0 \Vert y^*(y)$ $ > (\Vert x_0 \Vert \Vert x^* \Vert + \Vert y_0 \Vert \Vert y^* \Vert ) (1- \frac{\alpha}{2})$   
 %$$\quad \quad \quad   \geqslant z^*(z_0) (1- \frac{\alpha}{2})
 %>(1- \frac{\alpha}{4}) (1- \frac{\alpha}{2}) 
 %>1-\alpha $$ \\
%Thus , $z \in S(B_Z,z^*,\alpha)$ and similarly, $\tilde{z} \in S(B_Z,z^*,\alpha)$. Finally ,\\
 Also, $\Vert z - \tilde{z} \Vert ^p  = \Vert x_0 \Vert ^p \Vert x - \tilde{x} \Vert^p + \Vert y_0 \Vert ^p \Vert y - \tilde{y} \Vert^p  > \varepsilon^p (\Vert x_0 \Vert ^p + \Vert y_0 \Vert ^p)  = \varepsilon ^p$ which implies $\Vert z - \tilde{z} \Vert > \varepsilon,$ a contradiction.\\
 Hence either $X$ or $Y$ has $BDP.$

Conversely, assume that either $X$ or $Y$ has $BDP$. Without loss of generality let $X$ have $BDP.$ Let $\varepsilon > 0.$ Then there exists a slice $S(B_X,x^*,\alpha)$ of $B_X$ with diameter $<\varepsilon,$ where $x^* \in S_{X^*}$ and $\alpha>0.$ From Lemma $\ref{p sum lem},$ there exists a slice $S(B_Z,z^*,\mu)$ of $B_Z$ such that $ S(B_Z,z^*,\mu) \subset S(B_X,x^*,\alpha) \times \varepsilon B_Y$
Consequently,$$ dia (B_Z,z^*,\mu)\leqslant dia S(B_X,x^*,\alpha) + dia  (\varepsilon B_Y)< \varepsilon + 2\varepsilon = 3\varepsilon$$
\end{proof}

\begin{corollary}
Let $X=\oplus_p X_i.$ If $X_i$ has $BDP$ for some i , then $X$ has $BDP.$ 

\end{corollary}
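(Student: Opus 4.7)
The plan is to reduce the corollary to the two-summand case already handled by Proposition \ref{BDP p sum}. Suppose $X_{i_0}$ has $BDP$ for some index $i_0$. I will set $Y = \oplus_p\{X_i : i \neq i_0\}$, which is itself a Banach space because $\ell_p$-sums of Banach spaces are complete. The associativity of the $\ell_p$-sum construction then yields a natural isometric isomorphism $X \cong X_{i_0} \oplus_p Y$ via the map $(x_i)_i \mapsto (x_{i_0},(x_i)_{i\neq i_0})$, with the norm identity
$$\|(x_i)\|^p = \|x_{i_0}\|^p + \sum_{i \neq i_0}\|x_i\|^p = \|x_{i_0}\|^p + \|(x_i)_{i \neq i_0}\|_Y^p.$$

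With this identification in place, Proposition \ref{BDP p sum} applies directly: since $X_{i_0}$ has $BDP$, the two-summand space $X_{i_0} \oplus_p Y$ has $BDP$. As $BDP$ is plainly invariant under isometric isomorphism (a linear isometry between Banach spaces carries slices of the unit ball to slices of the unit ball and preserves their diameters), we conclude that $X$ itself has $BDP$.

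There is essentially no obstacle to the argument; the corollary is little more than the iterated form of the two-summand case, together with the trivial observation that $BDP$ is preserved by isometric isomorphism. The only point I would want to flag explicitly is the associativity identification of a possibly infinite $\ell_p$-sum with a two-block $\ell_p$-sum, which is the one-line norm computation above.
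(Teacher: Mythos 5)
Your proof is correct and is essentially the argument the paper intends: split off the summand $X_{i_0}$, identify $X$ isometrically with $X_{i_0}\oplus_p Y$ where $Y$ is the $\ell_p$-sum of the remaining summands, and invoke Proposition \ref{BDP p sum} (this is exactly the pattern the paper makes explicit for its $c_0(\Gamma)$ analogue). Your added remarks on associativity of the $\ell_p$-sum and invariance of $BDP$ under isometric isomorphism are correct and only make the argument more careful than the paper's one-line reduction.
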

We quote the following Lemma from \cite{L1}.

\begin{lemma} \cite{L1}\label{BDP lem inf}
 Let $Z= X\oplus_{\infty} Y$ then for every slice $S(B_Z,z^*,\alpha)$ of $B_Z$ there exists a slice $S(B_X,x^*,\mu_1)$ of $B_X$ , a slice $S(B_Y,y^*,\mu_2)$ of $B_Y,$  $x_0 \in B_X$ and  $y_0 \in B_Y$ such that $S(B_X,x^*,\mu_1) \times \{y_0\} \subset S(B_Z,z^*,\alpha)$ and $\{x_0\} \times S(B_Y,y^*,\mu_2)  \subset S(B_Z,z^*,\alpha).$ 
\end{lemma}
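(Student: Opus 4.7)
The plan is to unpack the $\ell_\infty$-sum structure: since $Z^* = X^* \oplus_1 Y^*$, any $z^* \in S_{Z^*}$ can be written as $(x^*, y^*)$ with $\|x^*\| + \|y^*\| = 1$ and $z^*(x,y) = x^*(x) + y^*(y)$, with supremum $1$ over $B_Z$. I would first dispose of the degenerate case where one component vanishes, say $y^* = 0$: then $\|x^*\|=1$ and the slice factors as $S(B_X, x^*, \alpha) \times B_Y$, so the desired inclusions hold trivially by choosing $\mu_1 = \alpha$, any $y_0 \in B_Y$, together with any slice of $B_Y$ and any $x_0 \in S(B_X, x^*, \alpha)$.

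For the main case $a := \|x^*\| > 0$ and $b := \|y^*\| > 0$, my first step is to push a reference point deep into the $Z$-slice by picking $x_0 \in B_X$ and $y_0 \in B_Y$ with $x^*(x_0) > a - \alpha/4$ and $y^*(y_0) > b - \alpha/4$, both available by the definition of the norm, giving $z^*(x_0, y_0) > 1 - \alpha/2$. Next, I set $\mu_1 = \alpha/(2a)$ and work with the slice $S(B_X, x^*/a, \mu_1)$ of $B_X$. For any $x$ in this slice one has $x^*(x) > a(1 - \mu_1) = a - \alpha/2$, and consequently
$$z^*(x, y_0) = x^*(x) + y^*(y_0) > (a - \alpha/2) + (b - \alpha/4) = 1 - 3\alpha/4 > 1 - \alpha,$$
which establishes $S(B_X, x^*/a, \mu_1) \times \{y_0\} \subset S(B_Z, z^*, \alpha)$. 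The symmetric choice $\mu_2 = \alpha/(2b)$ together with $S(B_Y, y^*/b, \mu_2)$ and the same $x_0$ gives the second inclusion.

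The only delicate point is bookkeeping: one has to ensure that the slicing functionals of $B_X$ and $B_Y$ are legitimately normalized (which is why I rescale $x^*$ by $1/a$ and $y^*$ by $1/b$) and that $\mu_1,\mu_2$ stay positive, both automatic once $a,b > 0$. I do not anticipate a genuine obstacle, since the argument is essentially an explicit computation reading off the product structure of $B_Z$ and the $\ell_1$ structure of $Z^*$; the only place requiring any real care is the degenerate case, where one must avoid writing down a ``slice'' determined by the zero functional.
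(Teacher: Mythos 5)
Your proof is correct. The paper states this lemma only as a quotation from \cite{L1} and supplies no proof of its own, so there is nothing internal to compare against; your argument --- writing $z^*=(x^*,y^*)$ with $\|x^*\|+\|y^*\|=1$ via $Z^*=X^*\oplus_1 Y^*$, anchoring a near-optimal point $(x_0,y_0)$, and rescaling each nonzero component functional to get the one-sided slices --- is the standard direct computation, and your separate treatment of the degenerate case $x^*=0$ or $y^*=0$ (where one must not slice by the zero functional) is exactly the care the statement requires.
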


\begin{proposition} \label{max sum}
  $Z= X\oplus_{\infty} Y$ has $BDP$ if and only if both $X$ and $Y$ have $BDP.$
\end{proposition}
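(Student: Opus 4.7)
My plan is to handle the two implications of the biconditional separately. The forward direction falls out directly from Lemma \ref{BDP lem inf}, while the reverse requires an explicit construction of a small-diameter slice of $B_Z$ from small-diameter slices of $B_X$ and $B_Y$. Unlike the $\oplus_p$ case with $p < \infty$ treated in Proposition \ref{BDP p sum}, no contradiction argument is needed for the forward direction here, because Lemma \ref{BDP lem inf} already hands us slices of the factors.

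For the forward direction, I would fix $\varepsilon > 0$, pick a slice $S(B_Z, z^*, \alpha)$ of diameter less than $\varepsilon$, and invoke Lemma \ref{BDP lem inf} to extract a slice $S(B_X, x^*, \mu_1)$ of $B_X$ and a point $y_0 \in B_Y$ with $S(B_X, x^*, \mu_1) \times \{y_0\} \subset S(B_Z, z^*, \alpha)$. Since the $\oplus_\infty$-distance between $(x, y_0)$ and $(\tilde{x}, y_0)$ equals $\|x - \tilde{x}\|$, the diameter of $S(B_X, x^*, \mu_1)$ is bounded by that of $S(B_Z, z^*, \alpha)$, hence is less than $\varepsilon$. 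The same argument applied to the second inclusion in the lemma gives $BDP$ for $Y$.

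For the converse, I would begin with $\varepsilon > 0$, choose norm-one functionals $x^* \in S_{X^*}$, $y^* \in S_{Y^*}$ and positive $\alpha_1, \alpha_2$ such that $S(B_X, x^*, \alpha_1)$ and $S(B_Y, y^*, \alpha_2)$ have diameter less than $\varepsilon$. Exploiting $(X \oplus_\infty Y)^* = X^* \oplus_1 Y^*$, I would set $z^* = \frac{1}{2}(x^*, y^*) \in S_{Z^*}$ and pick $\mu > 0$ with $2\mu \leq \min(\alpha_1, \alpha_2)$. A direct check shows that any $z = (x, y) \in S(B_Z, z^*, \mu)$ satisfies $\frac{1}{2}x^*(x) + \frac{1}{2}y^*(y) > 1 - \mu$ with each summand at most $1/2$, forcing $x^*(x) > 1 - 2\mu$ and $y^*(y) > 1 - 2\mu$, hence $x \in S(B_X, x^*, \alpha_1)$ and $y \in S(B_Y, y^*, \alpha_2)$. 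Because the diameter of a product in the $\oplus_\infty$-norm is the maximum of the factor diameters, the slice $S(B_Z, z^*, \mu)$ has diameter less than $\varepsilon$, establishing $BDP$ for $Z$.

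The main subtlety I anticipate is the rescaling in the definition of $z^*$: since the dual of $\oplus_\infty$ is $\oplus_1$, the naive choice $(x^*, y^*)$ with $\|x^*\| = \|y^*\| = 1$ has norm $2$, which is why the factor $1/2$ (and the matching doubling of $\mu$) is forced. Beyond this normalization, the argument reduces to routine bookkeeping with slices.
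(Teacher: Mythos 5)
Your proof is correct and follows essentially the same route as the paper: the forward direction by applying Lemma \ref{BDP lem inf} to a small-diameter slice of $B_Z$, and the converse via the functional $z^*=\frac{1}{2}(x^*,y^*)$ together with a sufficiently small slice width. In fact your choice $2\mu\leq\min\{\alpha_1,\alpha_2\}$ is the careful version of the paper's choice $\gamma<\min\{\alpha_1,\alpha_2\}$, which as written only gives $y^*(y)>1-2\gamma$ and therefore needs the extra factor of $2$ to conclude $y\in S(B_Y,y^*,\alpha_2)$.
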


\begin{proof}
First suppose that $Z$ has $BDP.$ Let $0< \varepsilon <2.$ Then there exists a slice $S(B_Z,z^*,\alpha)$ of $B_Z,$ such that $ dia (S(B_Z,z^*,\alpha) <\varepsilon,$ where $z^*=(x^*,y^*) \in S_{Z^*}$ and $\alpha>0.$\\
Claim :$x^* \neq 0$ and $y^* \neq0$. \\
If not, let $x^* =0.$ Then $\Vert y^* \Vert = 1.$ Choose any fixed $y_0 \in S(B_Y, y^*, \alpha).$ Then $B_X \times \{y_0\} \subset S(B_Z, z^*, \alpha).$
 So, $ 2 = dia (B_X \times \{y_0\})\leqslant dia S(B_Z, z^*, \alpha) <\varepsilon,$ a contradiction. Hence the claim.
 Now from Lemma $\ref{BDP lem inf},$ there exists a slice $S(B_X,x^*,\mu)$ of $B_X$ and $y_1 \in B_Y$ such that $S(B_X,x^*,\mu) \times \{y_1\} \subset S(B_Z,z^*,\alpha).$ 
 Consequently, $dia  S(B_X,x^*,\mu) = dia  (S(B_X,x^*,\mu)\times \{y_1\} )\leqslant diaS (B_Z, z^*, \alpha) < \varepsilon.$ 
 Thus, $X$ has $BDP.$ Similarly, $Y$ has $BDP$.\\
 Conversely let both $X$ and $Y$ have $BDP$ and $\varepsilon>0$.  So, there exist slices $S(B_X,x^*,\alpha_1)$ and $S(B_Y,y^*,\alpha_2)$ of $B_X$ and $B_Y$ respectively such that $ dia  S(B_X,x^*,\alpha_1)<\varepsilon$ and $ dia  S(B_Y,y^*,\alpha_2)<\varepsilon.$ Choose $0<\gamma<\min\{\alpha_1,\alpha_2\}.$ Consider slice $S(B_Z,z^*,\gamma)$ of $B_Z$ such that $z^*=(\frac{x^*}{2},\frac{y^*}{2})$. Then $\Vert z^*\Vert =1.$ Then $S(B_Z,z^*,\gamma)\subset S(B_X,x^*,\alpha_1) \oplus_{\infty} S(B_Y,y^*,\alpha_2). $ Indeed, let $z=(x,y)\in S(B_Z,z^*,\gamma).$ Then $$z^*(z)= \frac{x^*}{2}(x)+\frac{y^*}{2}(y)>1-\gamma$$
 $$\Rightarrow 1+y^*(y)\geqslant x^*(x)+y^*(y)>2-2\gamma$$
 $$\Rightarrow y^*(y)>1-2\gamma>1-\alpha_2$$
 Thus $y\in S(B_Y,y^*,\alpha_2)$ and similarly $x\in S(B_X,x^*,\alpha_1).$ Finally, $dia  S(B_Z,z^*,\gamma)\leqslant \varepsilon$ as both $S(B_X,x^*,\alpha_1)$ and $S(B_Y,y^*,\alpha_2)$ are of diameter $<\varepsilon.$  
\end{proof}

\begin{proposition}
 Let $X$ and $Y$ be two Banach spaces and $Z=X\oplus_pY$ , $1\leqslant p < \infty$ . Then $Z$ has $BHP$ if and only if $X$ or $Y$ has $BHP$.
\end{proposition}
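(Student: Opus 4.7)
The plan is to mirror Proposition~\ref{BDP p sum}, replacing slices by basic relatively weakly open neighbourhoods of the form $\{z \in B_Z : |z_i^*(z - z_0)| < \delta,\ 1 \leq i \leq n\}$. Taking a section of such a neighbourhood at a fixed $y_0 \in Y$ and rescaling produces a relatively weakly open subset of $B_X$ whose diameter is controlled by that of the original set, which drives the forward direction. The converse additionally requires a slice-type constraint to force the $Y$-component close to zero, since no weakly open set of $Y$ can control a norm bound on $y$.

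For the forward direction, suppose $Z$ has $BHP$ while neither $X$ nor $Y$ does, and pick a common $\varepsilon > 0$ witnessing both failures together with a basic relatively weakly open $U_0 = \{z \in B_Z : |z_i^*(z - z_0)| < \delta\}$ of diameter less than $\varepsilon \cdot 2^{-1/p}$; write $z_0 = (x_0, y_0)$ and $z_i^* = (x_i^*, y_i^*)$. Since $\|x_0\|^p + \|y_0\|^p \leq 1$, assume without loss of generality $\|y_0\|^p \leq 1/2$, so $c := (1 - \|y_0\|^p)^{1/p} \geq 2^{-1/p}$. The section $\tilde V := \{x \in cB_X : |x_i^*(x - x_0)| < \delta \text{ for all } i\}$ is relatively weakly open in $cB_X$ (because $\|x\| \leq c$ guarantees $(x, y_0) \in B_Z$), and $x \mapsto (x, y_0)$ embeds $\tilde V$ into $U_0$. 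Rescaling by $1/c$ yields a relatively weakly open $V \subseteq B_X$ with dia$(V) = $ dia$(\tilde V)/c < (\varepsilon \cdot 2^{-1/p})/c \leq \varepsilon$, contradicting the failure of $BHP$ for $X$.

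For the converse, assume without loss of generality that $X$ has $BHP$. If $X$ is finite-dimensional, then $X$ has $BDP$, and Proposition~\ref{BDP p sum} immediately gives $Z$ has $BDP$, hence $BHP$. If $X$ is infinite-dimensional, use weak density of $S_X$ in $B_X$: any relatively weakly open $V \subseteq B_X$ of diameter $< \varepsilon'$ contains some $x_0 \in S_X$. Shrink $V$ to a basic weak neighbourhood $V_0 = \{x \in B_X : |x_i^*(x - x_0)| < \delta\}$ around $x_0$, pick $x^* \in S_{X^*}$ with $x^*(x_0) = 1$, and for a small $\mu > 0$ define
$$U = \{(x, y) \in B_Z : |x_i^*(x - x_0)| < \delta \text{ for } 1 \leq i \leq n,\ x^*(x) > 1 - \mu\}.$$
Then $U$ is relatively weakly open in $B_Z$ (each constraint factors through $Z^*$ via $(x_i^*, 0)$ and $(x^*, 0)$) and contains $(x_0, 0)$. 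The linear constraints confine $x$-components to $V_0$, so $\|x - x'\| < \varepsilon'$, while $x^*(x) > 1 - \mu$ forces $\|x\| > 1 - \mu$ and hence $\|y\|^p \leq 1 - (1 - \mu)^p$; choosing both $\varepsilon'$ and $\mu$ small makes dia$(U)$ arbitrarily small.

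The main obstacle will be the converse direction, since a norm bound on $\|y\|$ cannot be enforced by a weakly open subset of $Y$ alone. I handle this via the slice constraint $x^*(x) > 1 - \mu$, which drives $\|y\|$ to zero through the $B_Z$-geometry but requires $V$ to contain a near-sphere point. Weak density of $S_X$ in $B_X$ takes care of this when $X$ is infinite-dimensional, and the finite-dimensional case is cleanly absorbed by the $BDP$ stability result already established.
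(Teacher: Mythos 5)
Your proposal is correct, but it takes a genuinely different route from the paper's proof, most noticeably in the converse. For the forward implication the paper fixes $z_0\in W\cap S_Z$, splits into cases according to whether a coordinate of $z_0$ vanishes, builds rescaled weakly open sets in \emph{both} $B_X$ and $B_Y$, extracts a far-apart pair from each, and recombines them into two points of $W$ at distance $>\varepsilon$; your section-at-$y_0$-and-rescale argument reaches the same contradiction with no case analysis and no normalization of $z_0$, using the failure of $BHP$ only in the coordinate whose $p$-th power of norm is at most $1/2$, at the cost of the harmless $2^{-1/p}$ loss in the diameter bound. For the converse the paper invokes the fact that every nonempty relatively weakly open subset of $B_X$ contains a nonempty finite intersection of slices and then applies Lemma \ref{p sum lem} to each of these slices, trapping the intersection of the resulting slices of $B_Z$ inside $W\times\varepsilon B_Y$; you instead build the weakly open subset of $B_Z$ directly, lifting the functionals defining a basic neighbourhood $V_0$ of a point $x_0\in V\cap S_X$ and adjoining the single slice condition $x^*(x)>1-\mu$ to force $\Vert y\Vert$ small. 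This makes the argument independent of the subbase-of-slices lemma, but it requires $V\cap S_X\neq\emptyset$, which you correctly secure via weak density of $S_X$ in $B_X$ when $X$ is infinite dimensional, falling back on $RNP\Rightarrow BDP$ together with Proposition \ref{BDP p sum} when $\dim X<\infty$ --- a dichotomy the paper's route avoids. Both proofs are sound; yours is somewhat more self-contained, while the paper's is shorter given the lemmas it quotes.
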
 

\begin{proof}
Suppose $Z$ has $BHP$. If possible let $X$ and $Y$ fail $BHP.$ Then there exists $\varepsilon > 0$ such that every relatively weakly open subset of $B_X$ and $B_Y$ has diameter greater than  $\varepsilon.$  Now since $Z$ has $BHP$ so there exists a relatively weakly open subset $W$ of $B_Z$ with diameter less than $\varepsilon.$  Fix $z_0=(x_0,y_0) \in W \bigcap S_Z$ . Then there exists a basic weakly open subset,
$W_0=\{z\in B_Z : \vert z_i^*(z-z_0)\vert <1 , i=1,2,...n\} \subset W$ where $z_i^* =(x_i^*,y_i^*)$, i = 1,2,...n. We consider two cases.\\
%\newpage
Case~1 : $x_0=0$ or $y_0=0$ \\
without loss of generality let $y_0= 0.$ Thus $x_0 \in S_X.$  Then \\
$U=\{ x\in B_X : \vert x_i^*(x-x_0)\vert <1 ; i=1,2,...n\}$ is nonempty relatively weakly open subset of $B_X$ . By our assumption $dia (U)$ $>\varepsilon$ . So there exists $x,\tilde{x} \in U$ such that $\Vert x-\tilde{x} \Vert > \varepsilon.$  Now, $z=(x,0)$ and $\tilde{z}=(\tilde{x},0)$ are in $W_0$  and $\Vert z- \tilde{z} \Vert = \Vert x- \tilde{x} \Vert > \varepsilon$, a contradiction.\\
Case~2 : $x_0 \neq 0$ and $y_0 \neq 0$ \\
Consider, $U=\{x \in B_X : \vert x_i^*(x-\frac{x_0}{\Vert x_0\Vert}) \vert < \frac{1}{2\Vert x_0\Vert} ; i=1,2,..n\}$ \\and 
$V=\{y \in B_Y : \vert y_i^*(y-\frac{y_0}{\Vert y_0\Vert}) \vert < \frac{1}{2\Vert y_0\Vert} ; i=1,2,..n\}$ \\
Then $U$ and $V$ are nonempty relatively weakly open subsets of $B_X$ and $B_Y$ respectively and so $dia (U)>$ $\varepsilon$ and $dia (V)>$ $\varepsilon$ . Hence, there exists $x,\tilde{x} \in U$ and $y,\tilde{y} \in V$ such that $\Vert x- \tilde{x} \Vert > \varepsilon$ and $\Vert y- \tilde{y} \Vert > \varepsilon.$ \\
Thus $z=(\Vert x_0\Vert x , \Vert y_0\Vert y)$ and $\tilde{z}=(\Vert x_0\Vert \tilde{x} , \Vert y_0\Vert \tilde{y})$ are in $W.$ \\
Indeed, $\Vert z \Vert^p = \Vert x_0 \Vert^p \Vert x \Vert^p + \Vert y_0 \Vert^p \Vert y \Vert^p \leqslant \Vert x_0 \Vert^p + \Vert y_0 \Vert^p = 1$,  and \\
$\forall i = 1,2,....n$ we have, \\
$\vert z_i^*(z-z_0)\vert = \vert x_i^*(\Vert x_0\Vert x - x_0) + y_i^*(\Vert y_0\Vert y - y_0)\vert$ $\leqslant \Vert x_0 \Vert \vert x_i^*(x-\frac{x_0}{\Vert x_0\Vert}) \vert +  \Vert y_0 \Vert \vert y_i^*(y-\frac{y_0}{\Vert y_0\Vert}) \vert$ 
$$\quad \quad \quad \quad \quad \quad \quad \quad \quad \quad \quad \quad\quad \quad  < \Vert x_0 \Vert \frac{1}{2\Vert x_0\Vert} + \Vert y_0 \Vert \frac{1}{2\Vert y_0\Vert} =1$$ 
Similarly for $\tilde{z}.$  \\
Finally,
 $\Vert z - \tilde{z} \Vert ^p $ = $\Vert x_0 \Vert ^p \Vert x - \tilde{x} \Vert^p + \Vert y_0 \Vert ^p \Vert y - \tilde{y} \Vert^p $ $> \varepsilon^p (\Vert x_0 \Vert ^p + \Vert y_0 \Vert ^p) $ = $\varepsilon ^p$ \\ and so $\Vert z - \tilde{z} \Vert > \varepsilon$, a contradiction.\\
 Hence either $X$ or $Y$ has $BHP.$ \\
Conversely assume that either $X$ or $Y$ has $BHP.$  Without loss of generality, suppose  $X$ has $BHP.$  Let $\varepsilon > 0.$ Then there exists a relatively weakly open set $W$ of $B_X$ with diameter $<\varepsilon.$  Since slices of $B_X$ forms a subbase for relatively weakly open subset of $B_X$, so there exists slices $S(B_X,x_i^*,\alpha_i)$, i= 1,2,...n of $B_X$ such that $\bigcap _{i=1}^{n} S(B_X,x_i^*,\alpha_i) \subset W.$   Now from Lemma $\ref{p sum lem}$ for each i , we get a slice $S(B_Z,z_i^*,\mu_i)$ of $B_Z$ such that  $S(B_Z,z_i^*,\mu_i) \subset S(B_X,x_i^*,\alpha_i) \times \varepsilon B_Y$.  Choose $\mu < \min\{\mu_1,\mu_2,...\mu_n\}.$ \\
Thus , $\bigcap _{i=1}^{n} S(B_Z,z_i^*,\mu) \subset \bigcap_{i=1}^{n} S(B_X,x_i^*,\alpha_i) \times \varepsilon B_Y \subset W \times \varepsilon B_Y.$\\
Hence $T = \bigcap _{i=1}^{n} S(B_Z,z_i^*,\mu)$ is a relatively weakly open subset of $B_Z$  with diameter less than $3\varepsilon.$  Consequently $Z$ has $BHP$. 

\end{proof}

\begin{corollary}
Let $X=\oplus_p X_i$ . If $X_i$ has $BHP$ for some i , then $X$ has $BHP$.

\end{corollary}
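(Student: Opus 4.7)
The plan is to reduce to the two-summand case just established in the preceding proposition, using the associativity of the $\ell_p$-sum. Concretely, fix an index $i_0$ for which $X_{i_0}$ has $BHP$, and set $Y = \oplus_p \{X_j : j \neq i_0\}$. The definition of the $\ell_p$-norm immediately yields the natural isometric identification
\[
X \;=\; \bigoplus_p X_i \;\cong\; X_{i_0} \oplus_p Y,
\]
an isometric isomorphism that carries $B_X$ onto $B_{X_{i_0} \oplus_p Y}$ and respects weak topologies, hence preserves $BHP$.

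Once this reduction is in place, the preceding proposition (for the two-factor case) does all the work: since $X_{i_0}$ has $BHP$, the sum $X_{i_0} \oplus_p Y$ has $BHP$, and therefore so does $X$. There is essentially no additional calculation to perform; the only thing worth noting is that the argument applies whether the index set is finite or infinite, because associativity of $\ell_p$-direct sums holds in either case and requires nothing beyond the definition of the norm.

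The ``main obstacle'' is only a cosmetic one: making sure one is invoking the correct direction of the previous proposition (the easier, constructive implication that a $BHP$ factor forces the sum to have $BHP$). No genuine new idea is needed, and the proof should occupy only a couple of lines in the final write-up.
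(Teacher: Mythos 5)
Your argument is correct and is exactly the intended one: the paper states this as an immediate corollary of the two-summand proposition, and the decomposition $X \cong X_{i_0} \oplus_p \bigl(\oplus_p\{X_j : j \neq i_0\}\bigr)$ via associativity of the $\ell_p$-sum is precisely the reduction the authors use explicitly for the analogous $c_0(\Gamma)$ result. Nothing further is needed.
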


The following Lemma from \cite{ALN} will be useful.
\begin{lemma} \label {BHP inf sum} \cite{ALN}
Let $X$ and $Y$ be Banach spaces and $W$ be a nonempty relatively weakly open subset in unit ball of $Z= X \oplus _{\infty} Y.$  Then $U$ and $V$ can be chosen to be relatively weakly opn subsets of $B_X$ and $B_Y$ respectively such that $U \times V \subset W.$
\end{lemma}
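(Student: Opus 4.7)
The plan is to exploit the crucial identification $B_Z = B_X \times B_Y$, which holds precisely because $Z = X \oplus_\infty Y$ uses the max norm. This is what makes the factorization $U \times V \subset W$ even stand a chance of landing inside $B_Z$; for $\ell_p$ sums with $p < \infty$ the analogous statement would fail, which explains why an $\ell_\infty$-specific lemma is needed.

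First I would fix any point $z_0 = (x_0, y_0) \in W$ and shrink $W$ to a basic relatively weakly open neighborhood of $z_0$ of the form
$$W_0 = \{z \in B_Z : |z_i^*(z - z_0)| < \delta,\ i = 1, \ldots, n\} \subset W,$$
where each $z_i^* \in Z^*$ splits as $z_i^* = (x_i^*, y_i^*)$ under the canonical identification $Z^* = X^* \oplus_1 Y^*$. Since $\|z_0\|_\infty = \max(\|x_0\|, \|y_0\|) \leq 1$, we automatically have $x_0 \in B_X$ and $y_0 \in B_Y$.

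Next I would define the candidate factors
$$U = \{x \in B_X : |x_i^*(x - x_0)| < \delta/2,\ i = 1, \ldots, n\}, \quad V = \{y \in B_Y : |y_i^*(y - y_0)| < \delta/2,\ i = 1, \ldots, n\},$$
which are nonempty (they contain $x_0$ and $y_0$ respectively) and relatively weakly open in $B_X$ and $B_Y$. For any $(x, y) \in U \times V$, the max-norm identification gives $\|(x,y)\|_Z = \max(\|x\|,\|y\|) \leq 1$, so $(x,y) \in B_Z$. The triangle inequality then yields
$$|z_i^*(x,y) - z_i^*(x_0,y_0)| \leq |x_i^*(x-x_0)| + |y_i^*(y-y_0)| < \delta/2 + \delta/2 = \delta,$$
so $(x,y) \in W_0 \subset W$, giving $U \times V \subset W$.

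The only conceptual step is recognizing that the $\ell_\infty$ structure simultaneously decouples the norm constraint (letting $U$ and $V$ be chosen independently inside their own unit balls) and keeps the functional constraints additive in the two coordinates, so splitting the tolerance $\delta$ evenly between the two factors closes the estimate. There is no real obstacle beyond bookkeeping; the proof is essentially a direct verification once the basic neighborhood $W_0$ and the factorization $Z^* = X^* \oplus_1 Y^*$ are in hand.
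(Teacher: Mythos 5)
Your proof is correct. The paper states this lemma without proof, simply citing \cite{ALN}, and your argument --- passing to a basic weakly open neighborhood of a point $z_0=(x_0,y_0)\in W$, using the identifications $B_{X\oplus_\infty Y}=B_X\times B_Y$ and $(X\oplus_\infty Y)^*=X^*\oplus_1 Y^*$, and splitting the tolerance $\delta$ between the two coordinates --- is exactly the standard argument from that reference, so it correctly supplies the details the paper omits.
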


\begin{proposition}\label{l-infinity bhp}
 $Z= X\oplus_{\infty} Y$ has $BHP$ if and only if both $X$ and $Y$ have $BHP$ .
\end{proposition}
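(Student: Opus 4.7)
The plan is to exploit Lemma \ref{BHP inf sum}, which already encodes the essential geometric fact that in the $\ell_\infty$-sum, nonempty relatively weakly open subsets of $B_Z$ contain rectangular neighborhoods $U\times V$ with $U,V$ weakly open in $B_X, B_Y$. Once this is in hand, both directions amount to simple diameter comparisons, using the identification $B_Z = B_X\times B_Y$ and the fact that the weak topology of $X\oplus_\infty Y$ is the product of the weak topologies.

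For the forward direction, suppose $Z$ has $BHP$ and fix $\varepsilon>0$. Take a relatively weakly open $W\subseteq B_Z$ with $\mathrm{diam}(W)<\varepsilon$. By Lemma \ref{BHP inf sum}, there exist nonempty relatively weakly open sets $U\subseteq B_X$ and $V\subseteq B_Y$ with $U\times V\subseteq W$. Pick any $y_0\in V$; then $U\times\{y_0\}\subseteq W$, and for $x_1,x_2\in U$ one has $\|(x_1,y_0)-(x_2,y_0)\|_\infty=\|x_1-x_2\|$, so $\mathrm{diam}(U)\leqslant \mathrm{diam}(W)<\varepsilon$. Symmetrically, fixing $x_0\in U$ shows $\mathrm{diam}(V)<\varepsilon$. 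Thus both $X$ and $Y$ have $BHP$.

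For the converse, assume both $X$ and $Y$ have $BHP$ and let $\varepsilon>0$. Choose relatively weakly open $U\subseteq B_X$ and $V\subseteq B_Y$ with $\mathrm{diam}(U)<\varepsilon$ and $\mathrm{diam}(V)<\varepsilon$. Since the weak topology on $Z=X\oplus_\infty Y$ coincides with the product of the weak topologies, and since $B_Z=B_X\times B_Y$, the product $U\times V$ is a nonempty relatively weakly open subset of $B_Z$. For any $(x_1,y_1),(x_2,y_2)\in U\times V$ we compute
\[
\|(x_1,y_1)-(x_2,y_2)\|_\infty=\max\bigl(\|x_1-x_2\|,\|y_1-y_2\|\bigr)<\varepsilon,
\]
so $\mathrm{diam}(U\times V)<\varepsilon$, and $Z$ has $BHP$. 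There is no real obstacle here; the only point that needs attention is verifying that the product weakly open set lives inside $B_Z$ and is weakly open relative to $B_Z$, which follows immediately from the $\ell_\infty$-identification. All the structural content has been absorbed into Lemma \ref{BHP inf sum}.
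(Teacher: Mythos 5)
Your proof is correct, and while the forward direction coincides with the paper's argument (apply Lemma \ref{BHP inf sum} to get $U\times V\subseteq W$, then compare diameters along a fixed slice $U\times\{v_0\}$), your converse takes a genuinely different and more direct route. The paper does not use the product-of-weak-topologies observation: instead it writes each of $U$ and $V$ as containing a finite intersection of slices (slices form a subbase for the relative weak topology on the ball), then reuses the slice construction from Proposition \ref{max sum} to manufacture slices $S(B_Z,z_i^*,\gamma_i)$ of $B_Z$ whose intersection sits inside $U\oplus_\infty V$. Your approach short-circuits all of this by noting that $Z^*=X^*\oplus_1 Y^*$ is spanned by the functionals $(x^*,0)$ and $(0,y^*)$, so $\sigma(Z,Z^*)$ is exactly the product of $\sigma(X,X^*)$ and $\sigma(Y,Y^*)$, and $B_Z=B_X\times B_Y$, whence $U\times V$ is itself relatively weakly open in $B_Z$ with $\mathrm{diam}(U\times V)=\max\bigl(\mathrm{diam}(U),\mathrm{diam}(V)\bigr)<\varepsilon$. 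This is cleaner and avoids the subbase reduction entirely; the only thing it costs is that you should state (in one line) why the weak topology on the $\ell_\infty$-sum is the product topology, since that identification is doing all the work. The paper's slice-based route has the mild advantage of staying uniform with the notation and machinery of Proposition \ref{max sum}, but it proves nothing more.
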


\begin{proof}
First suppose that $Z$ has $BHP$ . Also let $\varepsilon >0$. Then $B_Z$ has a relatively weakly open subset $W_0$ with diameter less than $<\varepsilon.$ Then by Lemma $\ref{BHP inf sum},$ there exists a relatively weakly open subset $U_0$ in $B_X$ and $V_0$ in $B_Y$ such that $U_0 \times V_0 \subset W_0.$  Fix $u_0\in U_0$ and $v_0\in V_0$ . Then,\\
dia $(U_0)$ = dia $(U_0 \times \{v_0\})\leqslant$ dia ($U_0 \times V_0) \leqslant$ dia $W_0 < \varepsilon.$ \\
Similarly for $V_0.$  Consequently, both $X$ and$Y$ have $BHP$ . \\
Conversely suppose  $X$ and $Y$ have $BHP$ and $\varepsilon>0$. So, there exists relatively weakly open subset $U$ and $V$ of $B_X$ and $B_Y$ respectively such that $dia (U)<\varepsilon$ and $dia (V)<\varepsilon$. Since slices of $B_X$ forms a subbase for relatively weakly open subset of $B_X$ , so there exists slices $S(B_X,x_i^*,\alpha_i)$ , i= 1,2,$\ldots$, n of $B_X$ such that $\bigcap _{i=1}^{n} S(B_X,x_i^*,\alpha_i) \subset U$ and similarly  there exists slices $S(B_Y,y_i^*,\beta_j)$ , j= 1,2,$\ldots$, m of $B_Y$ such that $\bigcap _{j=1}^{m} S(B_Y,y_i^*,\beta_j) \subset V.$ Without loss of generality let $n\geqslant m$. Then proceeding same way as in Proposition $\ref{max sum}$ we get slices of $B_Z$, $S(B_Z,z_i^*,\gamma_i)\quad \forall i=1,\ldots,n$ \\where
 $\gamma_i<\min\{\alpha_i,\beta_i\}\quad if\quad  i=1,\ldots,m$ and $\gamma_i<\min\{\alpha_i,\beta_m\}\quad if\quad i=m+1,\ldots,n$ such that 
$$S(B_Z,z_i^*,\gamma_i)\subset S(B_X,x_i^*,\alpha_i) \oplus_{\infty} S(B_Y,y_i^*,\beta_i) \quad\forall i=1,\ldots,m$$ and
 $$S(B_Z,z_i^*,\gamma_i)\subset S(B_X,x_i^*,\alpha_i) \oplus_{\infty} S(B_Y,y_i^*,\beta_i) \quad\forall i=m+1,\ldots,n$$
 Thus , $\bigcap_{i=1}^{n} S(B_Z,z_i^*,\gamma_i)$ $\subset$
  $ \bigcap_{i=1}^{n} S(B_X,x_i^*,\alpha_i) \oplus_{\infty} \bigcap_{i=1}^{m} S(B_Y,y_i^*,\beta_i)$ 
  $\subset$ 
   $ U\oplus_{\infty} V .$\\
    Hence, $dia (\bigcap_{i=1}^{n} S(B_Z,z_i^*,\gamma_i))<\varepsilon$
\end{proof}

\begin{proposition}
 Let $X$ and $Y$ be two Banach spaces and $Z=X\oplus_pY$ , $1\leqslant p < \infty.$ $Z$ has $BSCSP$ if $X$ or $Y$ has $BSCSP.$
\end{proposition}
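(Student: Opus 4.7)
Assume without loss of generality that $X$ has $BSCSP$. The strategy is to lift a small-diameter convex combination of slices from $B_X$ to $B_Z$ using Lemma \ref{p sum lem} slice-by-slice, and then bound the diameter using the $\ell_p$-structure of $Z$.

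Given $\varepsilon > 0$, choose a convex combination $\sum_{i=1}^n \lambda_i S(B_X, x_i^*, \alpha_i)$ of slices of $B_X$ of diameter less than $\varepsilon$. For each $i$, apply Lemma \ref{p sum lem} to produce a slice $S(B_Z, z_i^*, \mu_i)$ of $B_Z$ with
$$S(B_Z, z_i^*, \mu_i) \subset S(B_X, x_i^*, \alpha_i) \times \varepsilon B_Y.$$
Then $T = \sum_{i=1}^n \lambda_i S(B_Z, z_i^*, \mu_i)$ is a convex combination of slices of $B_Z$, and this is the candidate set.

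To estimate the diameter of $T$, take two generic points $z = \sum_i \lambda_i(x_i, y_i)$ and $\tilde z = \sum_i \lambda_i(\tilde x_i, \tilde y_i)$ in $T$, where $(x_i, y_i), (\tilde x_i, \tilde y_i) \in S(B_Z, z_i^*, \mu_i)$. The inclusion above forces $x_i, \tilde x_i \in S(B_X, x_i^*, \alpha_i)$ and $\|y_i\|, \|\tilde y_i\| < \varepsilon$. The $X$-coordinate of $z - \tilde z$ is the difference of two points of $\sum_i \lambda_i S(B_X, x_i^*, \alpha_i)$, so its norm is less than $\varepsilon$; the $Y$-coordinate has norm at most $\sum_i \lambda_i(\|y_i\| + \|\tilde y_i\|) < 2\varepsilon$. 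Therefore
$$\|z - \tilde z\|^p < \varepsilon^p + (2\varepsilon)^p = (1 + 2^p)\varepsilon^p,$$
giving $\mathrm{dia}(T) \leq (1 + 2^p)^{1/p}\varepsilon$, which establishes $BSCSP$ for $Z$.

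No step looks technically difficult; the only point requiring care is recognizing that after taking the convex combination, the $X$-component of a difference of two elements of $T$ automatically lies inside a difference of two elements of the original small-diameter convex combination in $B_X$, so that the diameter bound transfers. Note also that the one-directional nature of the statement (unlike the two-directional statement for $BDP$ in Proposition \ref{BDP p sum}) is natural here, since even if both $X$ and $Y$ fail $BSCSP$, the convex-combination structure of slices of $Z$ need not descend to combinations of slices on either factor, so no converse argument is attempted.
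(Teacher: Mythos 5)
Your proposal is correct and follows essentially the same route as the paper: reduce to the case where $X$ has $BSCSP$, lift each slice via Lemma \ref{p sum lem} into $S(B_X,x_i^*,\alpha_i)\times\varepsilon B_Y$, and bound the diameter of the resulting convex combination of slices of $B_Z$. Your explicit coordinatewise estimate yielding $(1+2^p)^{1/p}\varepsilon$ is just a slightly sharper form of the paper's $3\varepsilon$ bound and makes no essential difference.
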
 

\begin{proof}
Without loss of generality, let $X$ have $BSCSP.$  Let $\varepsilon > 0.$ Then there exists a convex combination of slices $\sum_{i=1}^{n} \lambda_i S(B_X, x_i^*, \alpha_i)$ , $\lambda_i \geqslant 0 , \sum_{i=1}^{n} \lambda_i =1 $ of $B_X$ with diameter less than $\varepsilon.$ By Lemma $\ref{p sum lem},$ for each i, there exists a  slice $S(B_Z,z_i^*,\mu_i)$ of $B_Z$ such that  $S(B_Z,z_i^*,\mu_i) \subset S(B_X,x_i^*,\alpha_i) \times \varepsilon B_Y.$  
Thus, $\sum _{i=1}^{n} \lambda_i S(B_Z,z_i^*,\mu_{i}) \subset \sum_{i=1}^{n} \lambda_i \Big[ S(B_X,x_i^*,\alpha_i) \times \varepsilon B_Y \Big].$ 
Hence, $dia(\sum _{i=1}^{n} \lambda_i S(B_Z,z_i^*,\mu_{i})) < 3 \epsilon.$ Consequently $Z$ has $BSCSP.$ 

\end{proof}

\begin{corollary}
Let $X=\oplus_p X_i.$  If $X_i$ has $BSCSP$ for some $i,$ then $X$ has $BSCSP.$ 
\end{corollary}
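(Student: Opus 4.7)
The plan is to reduce the corollary directly to the preceding proposition by an isometric splitting of the $\ell_p$-sum. Suppose $X_{i_0}$ is the summand having $BSCSP$. Write
\[
X \;=\; \bigoplus\nolimits_p X_i \;\cong\; X_{i_0} \oplus_p Y, \qquad Y \;=\; \bigoplus\nolimits_{\,i \neq i_0}^{p} X_i,
\]
where the identification is the canonical isometric isomorphism that separates the $i_0$-coordinate from the rest. Since $X_{i_0}$ has $BSCSP$ by hypothesis, the preceding proposition (applied to the two-summand $\ell_p$-sum $X_{i_0} \oplus_p Y$) immediately yields that $X$ has $BSCSP$.

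There is essentially no obstacle: the entire content is recognising that $BSCSP$ is preserved under the trivial regrouping of summands in an $\ell_p$-sum, and then invoking the two-summand result already proved. This mirrors the proofs of the analogous corollaries for $BDP$ and $BHP$ stated earlier in this section, which are handled in exactly the same way.
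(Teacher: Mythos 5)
Your proposal is correct and is exactly the argument the paper intends: split off the summand $X_{i_0}$ via the canonical isometry $X \cong X_{i_0} \oplus_p \bigl(\oplus_p X_i\bigr)_{i\neq i_0}$ and invoke the preceding two-summand proposition. This matches the pattern the paper uses for the analogous $BDP$, $BHP$, and $c_0(\Gamma)$ corollaries, so there is nothing to add.
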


\begin{proposition}
	If $Z= X\oplus_1 Y$ has $BSCSP,$ then either $X$ or $Y$ has $BSCSP.$ 
\end{proposition}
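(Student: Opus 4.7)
The plan is to argue by contradiction: suppose neither $X$ nor $Y$ has $BSCSP$, fix $\varepsilon > 0$ witnessing this failure simultaneously for both spaces, and then convert a convex combination of slices in $B_Z$ of diameter much smaller than $\varepsilon$ into a convex combination of slices in either $B_X$ or $B_Y$ of diameter less than $\varepsilon$. The $\ell_1$-specific ingredient that will drive the argument is the dual identification $Z^* = X^* \oplus_\infty Y^*$: a normalised functional $z^* = (x^*, y^*)$ on $Z$ must satisfy $\max(\|x^*\|, \|y^*\|) = 1$. Coupled with the isometric embedding $u \mapsto (u, 0)$ of $B_X$ into $B_Z$, this yields the key inclusion $S(B_X, x^*, \alpha) \times \{0\} \subset S(B_Z, z^*, \alpha)$ whenever $\|x^*\| = 1$, because $z^*(u, 0) = x^*(u) > 1 - \alpha = \sup z^*(B_Z) - \alpha$. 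The symmetric statement holds when $\|y^*\| = 1$.

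Given a convex combination $C = \sum_{i=1}^{n} \lambda_i S(B_Z, z_i^*, \alpha_i)$ with $dia(C) < \delta$ and each $\|z_i^*\| = 1$, I would partition the indices into $I = \{i : \|x_i^*\| = 1\}$ and $J = \{i : \|y_i^*\| = 1\}$. Since $I \cup J = \{1, \ldots, n\}$, setting $\Lambda_X = \sum_{i \in I} \lambda_i$ and $\Lambda_Y = \sum_{i \in J} \lambda_i$ gives $\Lambda_X + \Lambda_Y \geq 1$, so at least one of them is $\geq 1/2$; assume it is $\Lambda_X$. Freeze a choice of $v_j \in S(B_Z, z_j^*, \alpha_j)$ for each $j \in J \setminus I$, contributing fixed $X$- and $Y$-components to $\sum_i \lambda_i v_i$. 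For $i \in I$, let $v_i = (u_i, 0)$ with $u_i \in S(B_X, x_i^*, \alpha_i)$ arbitrary; by the key inclusion $v_i \in S(B_Z, z_i^*, \alpha_i)$. As the $u_i$'s vary, the resulting elements of $C$ differ only in their $X$-components, and additivity of the $\ell_1$ norm on $Z$ then yields
$$dia\Bigl(\sum_{i \in I} \lambda_i S(B_X, x_i^*, \alpha_i)\Bigr) \leq dia(C) < \delta.$$
Dividing through by $\Lambda_X$ produces the honest convex combination $\sum_{i \in I}(\lambda_i/\Lambda_X) S(B_X, x_i^*, \alpha_i)$ of slices of $B_X$ with diameter less than $\delta/\Lambda_X \leq 2\delta$.

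To finish, apply the construction to a sequence of convex combinations in $B_Z$ with diameters tending to zero; each one yields a convex combination of slices of either $B_X$ or $B_Y$ of diameter at most twice as small. Pigeonholing the dichotomy $\Lambda_X \geq 1/2$ versus $\Lambda_Y \geq 1/2$ over this sequence forces one of the two spaces to admit such combinations of arbitrarily small diameter, contradicting the standing assumption that neither does. The only delicate point I foresee is the careful bookkeeping of the dichotomy and the renormalisation constant; the geometric content reduces to the one-line observation that $B_X \times \{0\}$ sits inside the slice of $B_Z$ whenever the $X$-component of the functional is norming, which is a feature specific to the $\ell_1$ sum and fails for general $\ell_p$.
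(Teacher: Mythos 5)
Your proof is correct, and it starts from the same skeleton as the paper's --- the identification $Z^*=X^*\oplus_\infty Y^*$, the partition of indices according to which component of $z_i^*$ attains the norm, and the inclusion $S(B_X,x_i^*,\alpha_i)\times\{0\}\subset S(B_Z,z_i^*,\alpha_i)$ --- but the finishing move is genuinely different. The paper argues two-sidedly: assuming both $X$ and $Y$ fail $BSCSP$ for some $\varepsilon$, it extracts an $\varepsilon$-separated pair from the renormalised $I$-part \emph{and} from the $J$-part (after a case analysis on whether $\lambda_I$ or $\lambda_J$ vanishes), and then uses the $\ell_1$ additivity $\|(a,b)\|=\|a\|+\|b\|$ to assemble a pair in $\sum_i\lambda_i S(B_Z,z_i^*,\alpha_i)$ at distance $\lambda_I\|x-\tilde x\|+\lambda_J\|y-\tilde y\|>\varepsilon$, contradicting the small diameter. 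You argue one-sidedly: freezing the summands outside $I$ and letting the $I$-summands range over $S(B_X,x_i^*,\alpha_i)\times\{0\}$, the second coordinates cancel and the small diameter of the combination in $B_Z$ is transferred directly to $\sum_{i\in I}\lambda_i S(B_X,x_i^*,\alpha_i)$, at the cost of the renormalisation factor $1/\Lambda_X\le 2$ and a pigeonhole over a sequence of combinations. Your route buys a cleaner logical structure (no case split, and no need to handle overlap of $I$ and $J$, which the paper quietly assumes away by declaring them disjoint), and it isolates exactly where the $\ell_1$ structure is used: your diameter-transfer step needs only $\|(a,0)\|_Z=\|a\|_X$, valid for every $\ell_p$ sum, so the single genuinely $\ell_1$-specific ingredient is the slice inclusion, whereas the paper's final estimate also leans on additivity of the norm across the two coordinates. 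Both arguments are complete.
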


\begin{proof}
	If possible, let  $X$ and  $Y$ fail $BSCSP.$ Then there exists $\varepsilon > 0$ such that every convex combination of slices of $B_X$ and $B_Y$ have diameter greater than  $\varepsilon.$ Now since $Z$ has $BSCSP,$ so there exists convex combination of slices $\sum_{i=1}^{n} \lambda_i S(B_Z,z_i^*,\alpha_i)$  of $B_Z$ with diameter  less than $\varepsilon.$
	Observe,$ 1 = \Vert z_i^* \Vert = \max \{\Vert x_i^* \Vert , \Vert y_i^* \Vert \} ,\quad  i= 1,2,......n.$ 
	We consider two disjoint subsets $I$ and $J$ where,
	$ I = \{ i : \Vert x_i^* \Vert = 1 \}\quad and \quad  J = \{ j  : \Vert y_j^* \Vert = 1 \}$
	Now, $ S(B_X,x_i^*,\alpha_i) \times \{0\} \subset S(B_Z,z_i^*,\alpha_i) \quad \forall i \in I$
	and $ \{0\} \times S(B_Y,y_j^*,\alpha_j)  \subset S(B_Z,z_j^*,\alpha_j) \quad \forall j \in J.$\\
	Let $\quad \lambda_I =\sum_{i\in I} \lambda_i \quad and \quad \lambda_J =\sum_{j\in J} \lambda_j $
	
	Case-1 : $ \lambda_I = 0 $ or $ \lambda_J = 0 $\\
	Without loss of generality, let $ \lambda_I = 0 $\\ 
	Then $ \lambda_J = 1 $ and so $\sum_{j\in J} \lambda_j S(B_Y,y_j^*,\alpha_j)$ is a convex combination of slices of $B_Y,$ hence 
	dia ($\sum_{j\in J} \lambda_j S(B_Y,y_j^*,\alpha_j)) > \varepsilon $
	So, there exist $y, \tilde{y} \in \sum_{j\in J} \lambda_j S(B_Y,y_j^*,\alpha_j)$ such that $\Vert y - \tilde{y} \Vert > \varepsilon.$
	Hence, $\Vert (0,y) - (0,\tilde{y}) \Vert > \varepsilon$, which is a contradiction.
	
	Case-2 : $ \lambda_I \neq 0 $ or $ \lambda_J \neq 0 $\\
	So we have, $ \sum_{i\in I} \frac{\lambda_i}{\lambda_I} S(B_X, x_i^* , \alpha_i) \times \{0\} \subset  \sum_{i\in I} \frac{\lambda_i}{\lambda_I} S(B_Z, z_i^* , \alpha_i)$ and $\{0\} \times \sum_{j\in J} \frac{\lambda_j}{\lambda_J} S(B_Y, y_j^* , \alpha_j)   \subset  \sum_{j\in J} \frac{\lambda_j}{\lambda_J} S(B_Z, z_j^* , \alpha_j).$
	Again, $dia (\sum_{i\in I} \frac{\lambda_i}{\lambda_I} S(B_X, x_i^* , \alpha_i)) > \varepsilon $ and $dia (\sum_{j\in J} \frac{\lambda_j}{\lambda_J} S(B_Y, y_j^* , \alpha_j)) > \varepsilon.$
	So, there exist \ $x,\tilde{x} \in \sum_{i\in I} \frac{\lambda_i}{\lambda_I} S(B_X, x_i^* , \alpha_i)$ and \ $y,\tilde{y} \in \sum_{j\in J} \frac{\lambda_j}{\lambda_J} S(B_Y, y_j^* , \alpha_j)$ such that $ \Vert x-\tilde{x} \Vert > \varepsilon$ and $\Vert y-\tilde{y} \Vert > \varepsilon$

	Observe,  $$ (\lambda_I x, \lambda_J y) = (\lambda_I x, 0) +(0, \lambda_J y) \in \sum_{i\in I} \lambda_i S(B_X, x_i^* , \alpha_i)\times \{0\} + \{0\} \times \sum_{j\in J} \lambda_j S(B_Y, y_j^* , \alpha_j)$$ 
	$$\hspace{3.1 cm} \subset \sum_{i\in I} \lambda_i S(B_Z, z_i^* , \alpha_i)+\sum_{j\in J} \lambda_j S(B_Z, z_j^* , \alpha_j)$$  $$= \sum_{i=1}^{n} \lambda_i S(B_Z, z_i^* , \alpha_i) $$
	Similarly, $(\lambda_I \tilde{x} , \lambda_J \tilde{y} ) \in \sum_{i=1}^{n} \lambda_i S(B_Z, z_i^* , \alpha_i) $\\
	Also, $\Vert (\lambda_I x, \lambda_J y) - (\lambda_I \tilde{x} , \lambda_J \tilde{y} ) \Vert = \Vert (\lambda_I (x-\tilde{x}) , \lambda_J (y-\tilde{y} ) \Vert = \lambda_I \Vert x-\tilde{x} \Vert + \lambda_J \Vert y-\tilde{y} \Vert > \varepsilon (\lambda_I + \lambda_J) = \varepsilon, $ a contradiction .
	Hence, either $X$ or $Y$ has $BSCSP.$
	
\end{proof}

\begin{proposition}\label{l-infinity bscsp}
 If $Z= X\oplus_{\infty} Y$ has $BSCSP$ , then both $X$ and $Y$ have $BSCSP.$ 
\end{proposition}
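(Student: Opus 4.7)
The plan is to reduce the problem to the $BDP$-case style argument using Lemma~\ref{BDP lem inf}. Suppose $Z=X\oplus_\infty Y$ has $BSCSP$ and fix $\varepsilon>0$. Pick a convex combination of slices $\sum_{i=1}^n \lambda_i S(B_Z, z_i^*, \alpha_i)$ of $B_Z$ of diameter less than $\varepsilon$. The goal is to produce, for each such combination, a convex combination of slices of $B_X$ (respectively $B_Y$) whose diameter is controlled by the diameter of the $Z$-combination.

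The key idea is that for the $\oplus_\infty$ norm, ``thickening by a point'' in the $Y$-coordinate preserves diameter. Concretely, I would apply Lemma~\ref{BDP lem inf} to each slice $S(B_Z, z_i^*, \alpha_i)$ to obtain a slice $S(B_X, x_i^*, \mu_i)$ of $B_X$ and an element $y_i \in B_Y$ such that
\[
S(B_X, x_i^*, \mu_i) \times \{y_i\} \;\subset\; S(B_Z, z_i^*, \alpha_i).
\]
Taking the same convex combination on both sides and using that $\sum_{i=1}^n \lambda_i (A_i \times \{y_i\}) = \bigl(\sum_{i=1}^n \lambda_i A_i\bigr) \times \{\sum_{i=1}^n \lambda_i y_i\}$, I obtain
\[
\Bigl(\sum_{i=1}^n \lambda_i S(B_X, x_i^*, \mu_i)\Bigr) \times \Bigl\{\sum_{i=1}^n \lambda_i y_i\Bigr\} \;\subset\; \sum_{i=1}^n \lambda_i S(B_Z, z_i^*, \alpha_i).
\]

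Since the $\ell_\infty$-norm on $Z$ satisfies $\Vert(a_1,b)-(a_2,b)\Vert = \Vert a_1-a_2\Vert$, the diameter of the left-hand product set in $Z$ equals the diameter of $\sum_{i=1}^n \lambda_i S(B_X, x_i^*, \mu_i)$ in $X$. Combining with the inclusion above,
\[
\mathrm{dia}\Bigl(\sum_{i=1}^n \lambda_i S(B_X, x_i^*, \mu_i)\Bigr) \;\leq\; \mathrm{dia}\Bigl(\sum_{i=1}^n \lambda_i S(B_Z, z_i^*, \alpha_i)\Bigr) \;<\; \varepsilon.
\]
Since $\varepsilon>0$ was arbitrary, this exhibits a convex combination of slices of $B_X$ of arbitrarily small diameter, so $X$ has $BSCSP$. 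The argument for $Y$ is entirely symmetric, using the other half of Lemma~\ref{BDP lem inf}.

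There is no serious obstacle here: the whole argument is mechanical once Lemma~\ref{BDP lem inf} is in hand, the only thing to verify carefully is that the $\oplus_\infty$ norm lets us ignore the fixed $Y$-coordinate $\sum_i \lambda_i y_i$ when computing diameter, which is immediate.
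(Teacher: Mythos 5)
Your proposal is correct and follows essentially the same route as the paper: apply Lemma~\ref{BDP lem inf} slice by slice, observe that $\sum_i \lambda_i\bigl(S(B_X,x_i^*,\mu_i)\times\{y_i\}\bigr)=\bigl(\sum_i\lambda_i S(B_X,x_i^*,\mu_i)\bigr)\times\{\sum_i\lambda_i y_i\}$ sits inside the given convex combination of slices of $B_Z$, and use that the $\oplus_\infty$ norm makes the fixed second coordinate irrelevant to the diameter. The paper's proof is word for word this argument (with $y_0=\sum_i\lambda_i y_i$), so there is nothing to add.
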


\begin{proof}
Suppose  $Z$ has $BSCSP.$ So, for any $\varepsilon> 0,$ there exists a convex combination of slices $\sum_{i=1}^{n} \lambda_i S(B_Z, z_i^*, \alpha_{i}), \lambda_i \geqslant 0 , \sum_{i=1}^{n} \lambda_i =1 $  of $B_Z$ such that $ dia (\sum_{i=1}^{n} \lambda_i S(B_Z, z_i^*,\alpha_i) <\varepsilon.$ By Lemma $\ref{BDP lem inf},$ for each i , there exits a slice $S(B_X, x_i^*, \mu_i)$ of $B_X$ and $y_i \in B_Y$ such that  $S(B_X ,x_i^*, \mu_i) \times \{y_i\} \subset S(B_Z, z_i^*, \alpha_i).$
Let $y_0 = \sum_{i=1}^{n} \lambda_i y_i.$
$\Big[\sum_{i=1}^{n} \lambda_i S(B_X, x_i^*,\mu_i)\Big] \times \{y_0\} = \sum_{i=1}^{n} \lambda_i \Big[ S(B_X, x_i^*, \mu_i) \times \{y_i\} \Big] \subset \sum_{i=1}^{n} \lambda_i S(B_Z, z_i^*, \alpha_i).$ 
Hence,  $dia(\sum_{i=1}^{n} \lambda_i S(B_X, x_i^*, \mu_i)) = dia(\Big[\sum_{i=1}^{n} \lambda_i S(B_X, x_i^*, \mu_i)\Big] \times \{y_0\}$) $\leqslant dia (\sum_{i=1}^{n} \lambda_i S(B_Z, z_i^*, \alpha_i) \leq  \varepsilon.$
So, $X$ has $BSCSP.$ Similarly for $Y.$
\end{proof}

Similar results are true for $w^*$-versions. We omit the proofs which are similar. 

%\begin{proposition} \label{w star}
%$Z= X\oplus_{\infty} Y$, $1\leqslant p < \infty,$ then 
%\begin{enumerate}
%\item $Z^*$ has $w^*BHP$ ($w^*BDP$) if and only if $X^*$ or $Y^*$ has $w^*BHP$ ($w^*BDP$).
%\item  $X^*$ or $Y^*$ has $w^*BSCSP$ implies $Z^*$ has $w^*BSCSP$ .
%\end{enumerate}
%\end{proposition} 

%\begin{proposition}
%$Z= X\oplus_{\infty} Y,$ then\\
%$Z^*$ has $w^*BHP$ ( resp.  $w^*BDP$ , $w^*BSCSP$ ) implies both $X^*$ and $Y^*$ has $w^*BHP$ ( resp. $w^*BDP$ , $w^*BSCSP$).
%\end{proposition} 

\begin{proposition} \label{w star}
Let $X$ and $Y$ be Banach spaces and $Z=X\oplus_p Y$ , $1< p \leqslant \infty.$  Then 
\begin{enumerate}
\item $Z^*$ has $w^*BHP$ ($w^*BDP$) if and only if $X^*$ or $Y^*$ has $w^*BHP$ ($w^*BDP$).
\item  $X^*$ or $Y^*$ has $w^*BSCSP$ implies $Z^*$ has $w^*BSCSP.$ 
\end{enumerate}

\end{proposition}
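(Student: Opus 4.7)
The plan is to parallel the proofs already given for the norm versions of the analogous $p$-sum statements (Proposition \ref{BDP p sum} for $BDP$, the corresponding $BHP$ and $BSCSP$ propositions), using the duality $Z^*=(X\oplus_p Y)^*=X^*\oplus_q Y^*$ where $q$ is the conjugate exponent. Since $1<p\leqslant\infty$ we have $1\leqslant q<\infty$, so $Z^*$ is a $q$-sum with $q$ in the range where the norm propositions above are already established. The crucial discipline is that the $w^*$-topology on $Z^*$ is induced by $Z=X\oplus_p Y$, so every $w^*$-slice of $B_{Z^*}$ is of the form $S(B_{Z^*},(x,y),\alpha)$ with $(x,y)\in X\oplus_p Y$, and the two components $x\in X$, $y\in Y$ lie in the preduals of $X^*$ and $Y^*$ respectively.

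My first step is to record a $w^*$-analog of Lemma \ref{p sum lem}: given a $w^*$-slice $S(B_{X^*},x,\alpha)$ of $B_{X^*}$ (so $x\in X$, $\|x\|=1$) and $\varepsilon>0$, there exists $\mu\in(0,\alpha)$ such that the $w^*$-slice $S(B_{Z^*},(x,0),\mu)$ of $B_{Z^*}$ is contained in $S(B_{X^*},x,\alpha)\times\varepsilon B_{Y^*}$, and symmetrically for $Y^*$. The calculation is literally the same as in Lemma \ref{p sum lem}, only that the bound $\|x\|^p+\|y\|^p\leqslant 1$ on $B_Z$ is replaced by $\|x^*\|^q+\|y^*\|^q\leqslant 1$ on $B_{Z^*}$ (with the obvious modification for $q=1$), and the pairing is $(x,0)(x^*,y^*)=x^*(x)$. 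This provides the single technical tool driving everything else.

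For (1), the \emph{if} direction of both $w^*BDP$ and $w^*BHP$ follows at once by pulling back: starting from a small-diameter $w^*$-slice in $B_{X^*}$ (or a small-diameter finite intersection of $w^*$-slices, using that they form a subbase for the $w^*$-topology on $B_{X^*}$), apply the $w^*$-analog of Lemma \ref{p sum lem} to each factor and take the corresponding intersection in $B_{Z^*}$, exactly as in the $BHP$ proof for $p$-sums. For the \emph{only if} direction, I would run the contrapositive case-analysis of Proposition \ref{BDP p sum} (and its $BHP$ version) verbatim: given a small-diameter $w^*$-slice $S(B_{Z^*},(x,y),\alpha)$, split into the cases $x=0$, $y=0$, or both nonzero, and use the slices $S(B_{X^*},x/\|x\|,\alpha/2)$ and $S(B_{Y^*},y/\|y\|,\alpha/2)$, which are genuine $w^*$-slices precisely because the defining functionals sit in the preduals; the same algebraic construction $(\|x_0\|x^*,\|y_0\|y^*)$ produces two points in the original $w^*$-slice at distance $>\varepsilon$.

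Statement (2) is handled just as the $p$-sum $BSCSP$ proposition: WLOG $X^*$ has $w^*BSCSP$, pick a convex combination $\sum\lambda_i S(B_{X^*},x_i,\alpha_i)$ of $w^*$-slices of diameter $<\varepsilon$, apply the $w^*$-analog of Lemma \ref{p sum lem} factor by factor to produce $w^*$-slices $S(B_{Z^*},(x_i,0),\mu_i)\subset S(B_{X^*},x_i,\alpha_i)\times\varepsilon B_{Y^*}$, and take the convex combination with the same weights $\lambda_i$; its diameter is bounded by $3\varepsilon$. I do not anticipate any genuine obstacle; the main subtlety throughout is book-keeping, ensuring that every slice-defining functional used on $Z^*$ actually lies in the predual $Z$, which is automatic because the constructions only ever use embeddings of the form $(x,0)$ and $(0,y)$ from $X\oplus_p Y$.
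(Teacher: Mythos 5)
Your proposal is correct and is exactly the argument the paper intends: the authors omit the proof of Proposition \ref{w star} with the remark that it is ``similar'' to the norm-topology $p$-sum results, and your adaptation via $Z^*=X^*\oplus_q Y^*$ (conjugate exponent $q\in[1,\infty)$), together with the $w^*$-analogue of Lemma \ref{p sum lem}, is precisely that similarity made explicit. The one point you rightly emphasize --- that all slice-defining functionals on $Z^*$ arise as $(x,0)$ or $(0,y)$ from the predual, so every slice produced is genuinely a $w^*$-slice --- is the only place the transfer could have gone wrong, and you handle it correctly.
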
 

\begin{proposition}
Let $X$ and $Y$ be Banach spaces and $Z=X\oplus_1 Y$,   Then\\
$Z^*$ has $w^*BHP$ ( resp.  $w^*BDP$ , $w^*BSCSP$ ) implies both $X^*$ and $Y^*$ has $w^*BHP$ ( resp. $w^*BDP$ , $w^*BSCSP$).
\end{proposition}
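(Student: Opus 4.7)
The plan is to reduce everything to the three propositions already proved for the $\ell_{\infty}$-sum in the primal setting, by exploiting the isometric identification $Z^{*}=(X\oplus_{1}Y)^{*}=X^{*}\oplus_{\infty}Y^{*}$. Under this identification, the $w^{*}$-topology on $Z^{*}$ coincides with the product of the $w^{*}$-topologies on $X^{*}$ and $Y^{*}$ when restricted to $B_{Z^{*}}=B_{X^{*}}\times B_{Y^{*}}$, and every $w^{*}$-slice of $B_{Z^{*}}$ is determined by a predual vector $(x,y)\in Z$ with $\|x\|+\|y\|=1$, taking the form
$$S(B_{Z^{*}},(x,y),\alpha)=\{(x^{*},y^{*})\in B_{Z^{*}}:x^{*}(x)+y^{*}(y)>1-\alpha\}.$$

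The first step is to establish the $w^{*}$-analogues of Lemma \ref{BDP lem inf} and Lemma \ref{BHP inf sum}. For the slice lemma: given a $w^{*}$-slice $S(B_{Z^{*}},(x,y),\alpha)$ of $B_{Z^{*}}$, I would verify (by mimicking the original proof from \cite{L1}) that there exist $w^{*}$-slices $S(B_{X^{*}},x',\mu_{1})$ of $B_{X^{*}}$ and $S(B_{Y^{*}},y',\mu_{2})$ of $B_{Y^{*}}$, together with $x_{0}^{*}\in B_{X^{*}}$, $y_{0}^{*}\in B_{Y^{*}}$, such that $S(B_{X^{*}},x',\mu_{1})\times\{y_{0}^{*}\}$ and $\{x_{0}^{*}\}\times S(B_{Y^{*}},y',\mu_{2})$ are both contained in $S(B_{Z^{*}},(x,y),\alpha)$. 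The point is that the defining functionals $x'\in X$ and $y'\in Y$ produced by the argument lie in the predual, so the descended slices are genuinely $w^{*}$-slices. Similarly, the $w^{*}$-analogue of Lemma \ref{BHP inf sum}, derived by the same construction as in \cite{ALN}, states that every nonempty relatively $w^{*}$-open subset of $B_{Z^{*}}$ contains a product $U\times V$ of nonempty relatively $w^{*}$-open subsets of $B_{X^{*}}$ and $B_{Y^{*}}$.

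With these two lemmas at hand, the three implications are immediate adaptations of Propositions \ref{max sum}, \ref{l-infinity bhp} and \ref{l-infinity bscsp}. For $w^{*}BDP$: take a $w^{*}$-slice of $B_{Z^{*}}$ with diameter less than $\varepsilon<2$; the defining vector $(x,y)\in Z$ must have both $x\ne 0$ and $y\ne 0$ (otherwise the slice contains an entire factor ball of diameter $2$, as in Case~1 of Proposition \ref{max sum}), so the $w^{*}$-analogue of Lemma \ref{BDP lem inf} produces $w^{*}$-slices of $B_{X^{*}}$ and $B_{Y^{*}}$ with diameter at most $\varepsilon$. For $w^{*}BHP$: apply the $w^{*}$-analogue of Lemma \ref{BHP inf sum} to a $w^{*}$-open subset of small diameter, and fix one coordinate to obtain $w^{*}$-open subsets of $B_{X^{*}}$ and $B_{Y^{*}}$ of small diameter. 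For $w^{*}BSCSP$: use the slice lemma to descend each slice appearing in a convex combination of $w^{*}$-slices of $B_{Z^{*}}$ of small diameter to $B_{X^{*}}$ (respectively $B_{Y^{*}}$), fixing the complementary coordinate at the convex combination of the corresponding $y_{i}^{*}$ (respectively $x_{i}^{*}$).

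The main obstacle is not conceptual but bookkeeping: one must check carefully that in both lemmas every auxiliary functional can be chosen from the predual $Z$, so that all descended sets are honest $w^{*}$-open sets (or $w^{*}$-slices) rather than merely weakly open ones. Once this is verified, the three arguments parallel the original ones word for word, and the proposition follows.
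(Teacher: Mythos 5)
Your proposal is correct and matches the paper's intent exactly: the paper omits this proof with the remark that it is ``similar'' to the primal $\ell_\infty$-sum results, and the intended argument is precisely your reduction via $Z^*=X^*\oplus_\infty Y^*$ together with the observation that the auxiliary functionals in Lemmas \ref{BDP lem inf} and \ref{BHP inf sum} can be taken from the predual, so the descended slices and open sets are genuinely $w^*$-slices and $w^*$-open sets. Your explicit attention to that predual bookkeeping is the one point the paper leaves entirely implicit, and it is handled correctly.
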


\begin{proposition}
	Let  $X = \oplus_{c_0(\Gamma)}X_i$. If  $X$  has $BDP$ (resp. $BHP$ , $BSCSP$), then each of $X_i$ has $BDP$ (resp. $BHP$ , $BSCSP$) .
\end{proposition}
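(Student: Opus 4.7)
The plan is to reduce this statement to the already-established $\ell_\infty$-sum propositions (Propositions \ref{max sum}, \ref{l-infinity bhp} and \ref{l-infinity bscsp}) by exhibiting each $X_i$ as one factor of an $\ell_\infty$ splitting of $X$.

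Fix an arbitrary index $i_0 \in \Gamma$ and set $Y = \oplus_{c_0(\Gamma \setminus \{i_0\})} X_j$. A standard observation about $c_0$-sums is that separating off one coordinate gives a natural isometric identification $X = X_{i_0} \oplus_\infty Y$. Indeed, a typical element of $X$ is a family $(x_j)_{j\in\Gamma}$ with $x_j \in X_j$ such that $\{j : \|x_j\| > \varepsilon\}$ is finite for every $\varepsilon > 0$, and its norm equals $\sup_{j} \|x_j\| = \max\bigl(\|x_{i_0}\|,\ \sup_{j \neq i_0} \|x_j\|\bigr)$; the right-hand factor is exactly $Y$, and $Y$ is itself a Banach space (a $c_0$-sum). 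This observation is elementary and needs only a brief verification.

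With this decomposition in hand, the three cases become immediate applications of the already-proved $\ell_\infty$-sum results. If $X$ has $BDP$, then by Proposition \ref{max sum} both $X_{i_0}$ and $Y$ have $BDP$; in particular $X_{i_0}$ has $BDP$. If $X$ has $BHP$, then Proposition \ref{l-infinity bhp} gives that both factors have $BHP$, so $X_{i_0}$ has $BHP$. If $X$ has $BSCSP$, then Proposition \ref{l-infinity bscsp} yields that both factors have $BSCSP$, so $X_{i_0}$ has $BSCSP$. Since $i_0 \in \Gamma$ was arbitrary, each $X_i$ inherits the corresponding property from $X$.

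The only point requiring any care is the isometric identification $X = X_{i_0} \oplus_\infty Y$, but this is straightforward from the definition of the $c_0$-sum; there is no genuine obstacle, and the argument is routine once the $\ell_\infty$-sum propositions are in place.
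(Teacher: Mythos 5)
Your proposal is correct and follows exactly the paper's own argument: the paper likewise writes $X = X_{i}\oplus_{\infty}\bigl(\oplus_{c_0(\Gamma\setminus\{i\})}X_j\bigr)$ and then invokes Propositions \ref{max sum}, \ref{l-infinity bhp} and \ref{l-infinity bscsp}. Your added verification that the splitting is an isometric $\ell_\infty$-decomposition is a reasonable elaboration of a step the paper treats as immediate.
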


\begin{proof}
	Fix i. Note that $ X = \oplus_{c_0(\Gamma)}X_j$ = $X_i\oplus_{\infty} \Big(\oplus_{c_0(\Gamma \setminus \{i\})} X_j\Big).$
	The rest follows from Propositions \ref{max sum}, \ref{l-infinity bhp} and \ref {l-infinity bscsp}.
	 \end{proof}
\begin{remark}
	The converse of the above Proposition may not be true.
	 Indeed, $\mathbb{R}$ has $BDP$ (resp. $BHP$ , $BSCSP$) but $c_0$ does not have $BDP$ (resp. $BHP$ , $BSCSP$).
\end{remark}

We now show that none of the implications  in the following  diagram can be reversed.

$$ BDP \Longrightarrow \quad BHP \Longrightarrow \quad  BSCSP$$ $\quad \quad \quad \quad \quad \quad \quad \quad\quad\quad\quad\quad\quad\quad\quad\quad \Big \Uparrow \quad \quad\quad\quad\quad \Big \Uparrow \quad \quad\quad\quad\quad \Big \Uparrow$  $$ w^*BDP \Longrightarrow  w^*BHP \Longrightarrow  w^*BSCSP$$

%Now we will show that except for the obvious implications , shown in diag-1  , all converse \\
% implications are false in extreme way .  

\begin{example} \label{example1}
	\blr
\item $ BHP \nRightarrow BDP.$\\ It was proved in \cite{BGLPRZ1}  that, if a Banach space $X$ contains an isomorphic copy of $c_0$ then it can be equivalently renormed, so that every slice of unit ball of $X$ has diameter two but it has a relatively weakly open subset of arbitrarily small diameter.
Hence a Banach space containing an isomorphic copy of $c_0$ can be equivalently renormed so that it has $BHP$ but not $BDP.$ 
\item $ BSCSP \nRightarrow BHP.$\\
It was proved in \cite{BGLPRZ2} that , if a Banach space $X$ contains an isomorphic copy of $c_0$ then it can be equivalently renormed, so that every relatively weakly open subset of unit ball of $X$ has diameter two but it has convex combination of slices of arbitrarily small diameter.
Hence a Banach space containing an isomorphic copy of $c_0$ can be equivalently renormed so that it has $BSCSP$ but not $BHP.$
  
\item $ w^*BHP \nRightarrow  w^*BDP.$ \\ If we consider bidual of the  space in (i), then from Proposition $\ref{A1}$,  we get a space with $w^*BHP$  but not $w^* BDP$ .

\item $w^* BSCSP \nRightarrow w ^* BHP.$ \\ If we consider bidual of the  space in (ii), then from Proposition $\ref{A1}$, we get a space with $w^*BSCSP$  but not $w^* BHP.$

\item ${BDP \nRightarrow w^*BDP}$, $ {BHP \nRightarrow w^*BHP}$, $ {BSCSP \nRightarrow w^*BSCSP}$\\
Let $X=C[0,1].$ It is well known that $X^*=L_1[0,1] \oplus _1 Z$, for some subspace $Z$ of $X^*$ with $RNP$ and hence $Z$ has $BDP.$ Since  $Z$ has $BDP,$ it follows from Proposition \ref{BDP p sum}, $X^*$ has $BDP$ and hence it has $BHP$ and $BSCSP.$ But every convex combination of $w^*$ slices of $B_{X^*}$ has diameter two ( 
see  \cite{BGLPRZ3} for details). Hence, $C[0,1]^*$ does not have $w^*$-$BSCSP$ and consequently it does not have  $w^*$-$BDP$ , $w^*$-$BHP.$
 \el
\end{example}
\begin{remark}
In recent times,the study of diameter two properties has been a very active area of research in the geometry of Banach spaces( see \cite{BGLPRZ1},\cite{BGLPRZ2},\cite{BGLPRZ3},\cite{ALN},\cite{L1}). As is evident from the examples above that a Banach space may have one version of small diameter property and simultaneously may have a version of diameter two property. More investigations about the geometric implications of these observations will be an interesting topic of research in future. 	
\end{remark}

\section{Lebesgue Bochner Function Spaces}
Let $(\Omega,\mathcal{A},\mu)$ be a finite measure space. Then for $1\leqslant p<\infty,$  
$L^p(\mu,X)$ denote the Banach space of all Lebesgue-Bochner 
function of p-integrable $X$ valued functions defined on $\Omega$  with  norm $\|f\|_p=\Big(\int_{\Omega} \|f(t)\|^p d\mu (t)\Big)^{1/p}$ ( see \cite{DU} for details).
%Let $\mu$ denote the Lebesgue measure on [0,1].
%\begin{definition}
%	A function $f:[0,1]\rightarrow X$ is called {\bf $\mu$ measurable} if f is the $\mu$ almost everywhere limit of a sequence of simple functions.
%\end{definition}
%Let $s=\sum_{i=1}^{n} x_i \chi_{E_i}$ be a simple function from [0,1] to X , then Bochner integration of $s$ is defined as, 
%$\int_{[0,1]} s d\mu = \sum_{i=1}^{n} x_i \mu (E_i)$
%\begin{definition}
%	A $\mu$ measurable function $f:[0,1]\rightarrow X$ is called {\bf Bochner integrable} if there exists a sequence of simple functions $(s_n)$ such that 
%	$$\lim_{n} \int _{[0,1]} \|s_n(t)-f(t)\| d\mu (t)=0$$
%	In this case $\int_{[0,1]} f d\mu = \lim_{n} \int_{[0,1]} s_n d\mu$
%\end{definition}
%Let $1\leqslant p<\infty,$ let $L^p(\mu,X)$ denote the Banach space of all Lebesgue-Bochner function of p-integrable $X$ valued functions defined on [0,1] with  norm $\|f\|_p=\Big(\int_{[0,1]} \|f(t)\|^p d\mu (t)\Big)^{1/p}$.
%It is well known that if $\frac{1}{p}+\frac{1}{q}=1$ , $1<p,q<\infty$  then $L^q(\mu,X^*)$ is isometrically isomorphic to a norming subspace of  $L^p(\mu,X)^*$ and that they coincide if and only if $X^*$ has $RNP$ ( see \cite{DU} for details).
\begin{proposition} \label{bochner bhp}
	Let $(\Omega,\mathcal{A},\mu)$ be a finite measure space.Let $1\leqslant p<\infty.$ Then $L^p(\mu,X)$ has $BHP$ implies $X$ has $BHP.$ 
\end{proposition}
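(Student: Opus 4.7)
My plan is to exploit an isometric linear embedding of $X$ into $L^p(\mu, X)$ to transfer a small relatively weakly open subset of $B_{L^p(\mu, X)}$ to $B_X$.

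I will fix a measurable set $A \subseteq \Omega$ with $\mu(A) > 0$ and define $T_A : X \to L^p(\mu, X)$ by $T_A(x) = \mu(A)^{-1/p} x \chi_A$. Then $\|T_A(x)\|_p = \|x\|_X$, so $T_A$ is an isometric linear embedding with $T_A(B_X) \subseteq B_{L^p(\mu, X)}$, and being bounded linear it is weak-weak continuous. Given $\varepsilon > 0$, the $BHP$ of $L^p(\mu, X)$ yields a relatively weakly open $W \subseteq B_{L^p(\mu, X)}$ with $\mathrm{diam}(W) < \varepsilon$; writing $W = \widetilde W \cap B_{L^p(\mu, X)}$ for some weakly open $\widetilde W \subseteq L^p(\mu, X)$, I will take $U := T_A^{-1}(\widetilde W) \cap B_X$. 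By weak-weak continuity of $T_A$, $U$ is relatively weakly open in $B_X$, and for $x, y \in U$ the isometric property of $T_A$ gives $\|x - y\|_X = \|T_A(x) - T_A(y)\|_p \leq \mathrm{diam}(W) < \varepsilon$.

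The principal obstacle will be the non-emptiness of $U$, i.e., arranging $W \cap T_A(B_X) \neq \emptyset$ for some $A$. To handle this, I will pick $f_0 \in W$ with $\|f_0\|_p$ close to $1$ (such $f_0$ exists because $W$ is relatively norm-open in $B_{L^p(\mu, X)}$ and, in infinite dimensions, cannot lie strictly inside the open unit ball), approximate $f_0$ in norm by a simple function $\widetilde f_0 = \sum_{k=1}^{N} x_k \chi_{A_k}$ still lying in $W$, and choose $A = A_{k^*}$ where $\|x_{k^*}\|^p \mu(A_{k^*})$ is maximal among the pieces. Then the affine isometric plug-in map $\sigma(y) := \sum_{k \neq k^*} x_k \chi_{A_k} + \mu(A)^{-1/p} y \chi_A$ embeds $\rho B_X$ (with $\rho = \|x_{k^*}\| \mu(A_{k^*})^{1/p} \geq N^{-1/p}$) into $B_{L^p(\mu, X)}$ through $\widetilde f_0$, and pulling back $\widetilde W$ via $\sigma$ and rescaling yields a non-empty relatively weakly open subset of $B_X$ of diameter at most $\varepsilon/\rho$. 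Since $\varepsilon$ is arbitrary, taking $\varepsilon$ small enough (commensurate with the possibly small factor $1/\rho$) will produce relatively weakly open subsets of $B_X$ of arbitrarily small diameter, establishing $BHP$ for $X$.
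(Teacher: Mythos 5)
Your first two steps (the isometric plug-in $T_A$, and the reduction of everything to non-emptiness of the pullback) are sound, and you have correctly identified non-emptiness as the crux. But your resolution of it has a genuine quantitative gap. The radius $\rho=\|x_{k^*}\|\mu(A_{k^*})^{1/p}$ of the ball you can plug into the slot $A_{k^*}$ satisfies only $\rho^p\geq \|\widetilde f_0\|_p^p/N$, where $N$ is the number of pieces of the simple function $\widetilde f_0$; and $N$ is determined by how finely you must approximate $f_0$ to stay inside $\widetilde W$, which in turn depends on $W$, hence on $\varepsilon$. So the diameter bound you obtain for the relatively weakly open subset of $B_X$ is $\varepsilon/\rho(\varepsilon)$, with no a priori lower bound on $\rho(\varepsilon)$. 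The closing sentence --- ``taking $\varepsilon$ small enough (commensurate with the possibly small factor $1/\rho$)'' --- is circular: you cannot choose $\varepsilon$ in terms of $\rho$ because $\rho$ only comes into existence after $\varepsilon$, $W$, $f_0$ and $\widetilde f_0$ have been fixed. The same circularity persists if you recast the argument as a proof by contradiction (you would need $\varepsilon\leq\rho\varepsilon_0$, with $\rho$ still depending on $\varepsilon$). As it stands, your construction only shows that $B_X$ has non-empty relatively weakly open subsets of diameter less than $\varepsilon/\rho$, which could be large.

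The paper's proof avoids this loss precisely by not concentrating the perturbation in a single coordinate. Arguing by contradiction (every non-empty relatively weakly open subset of $B_X$ has diameter $>\varepsilon$), it produces weak neighbourhoods $W_i$ of each $x_i$ with $\sum_i W_i\chi_{A_i}\subset V$, finds in each rescaled ball $\|x_i\|B_X$ a pair $y_i,z_i\in W_i$ with $\|y_i-z_i\|>\varepsilon\|x_i\|$, and then aggregates: $\|g-h\|_p^p=\sum_i\|y_i-z_i\|^p\mu(A_i)>\varepsilon^p\sum_i\|x_i\|^p\mu(A_i)=\varepsilon^p$. The point is that the separation achievable in the $i$-th slot scales like $\|x_i\|$, exactly the weight with which that slot enters the $p$-norm, so the pieces recombine to give a clean $\varepsilon$ independently of $N$. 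If you want to salvage your approach, you would have to perturb all $N$ slots simultaneously in this manner --- which is essentially the paper's argument. Everything before your final paragraph is fine; the single-slot plug-in is the step that fails.
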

\begin{proof}
Suppose  $X$ does not have  $BHP.$ Then there exists $\varepsilon >0$ such that every relatively weakly open subset of $B_X$ has diameter $>\varepsilon.$ Since $L^p(\mu,X)$ has $BHP,$there exists  $V,$ a weakly open subset of $L^p(\mu,X)$ such that $V\bigcap B_{L^p(\mu,X)}\neq \emptyset$ and $dia(V\bigcap B_{L^p(\mu,X)})<\varepsilon.$ If $X$ is finite dimensional then it has $RNP$ and so it has $BHP.$ So we consider the case when $X$ is infinite dimensional .
	%Here we use similar technique as in \cite{ABGLP}.
	Then $L^p(\mu,X)$ is also infinite dimensional and so $V\bigcap S_{L^p(\mu,X)}\neq \emptyset.$ Choose $f_0\in V\bigcap S_{L^p(\mu,X)}.$ Since simple functions are dense in $L^p(\mu,X)$ so let $f_0=\sum_{i=1}^{n} x_i \chi_{A_i}$ where $A_i(i=1,2,\ldots,n)$ are disjoint sets in $\mathcal{A}$ and $x_i(\neq 0)\in X$  $\forall i=1,2,\ldots,n.$ Since weak topology on $L^p(\mu,X)$ is linear so $(x,y)\mapsto x+y$ from $L^p(\mu,X) \times L^p(\mu,X) \rightarrow L^p(\mu,X)$ is weak continuous.
%	$+:L^p(\mu,X) \times L^p(\mu,X) \rightarrow L^p(\mu,X)$ is weak continuous. 
	Thus there exist weakly open subsets $V_i$ containing $x_i\chi_{A_i}$ in $L^p(\mu,X)$ such that
	\begin{equation}\label{1equ}
		\sum_{i=1}^{n} V_i\subset V
	\end{equation}
	
	Since the map $x\mapsto x \chi_{A_i}$ from $X$ to $L^p(\mu,X)$ is linear and norm continuous so also weak continuous for each i. Thus for each i , there exists weakly open subset $W_i\subset X$ containing $x_i$ such that
	\begin{equation}\label{2equ}
		x \chi_{A_i} \in V_i \quad \forall  x\in W_i 
	\end{equation}
	
	Note that $\frac{W_i}{\|x_i\|}$ is a weakly open set in $X$ and $\frac{W_i}{\|x_i\|}\bigcap B_X\neq\emptyset$ as $\frac{x_i}{\|x_i\|}\in \frac{W_i}{\|x_i\|}\bigcap B_X.$ Then $diam(\frac{W_i}{\|x_i\|}\bigcap B_X)>\varepsilon$ implies $diam({W_i}\bigcap {\|x_i\|}B_X)>\varepsilon {\|x_i\|}.$ Then for each i , there exist $y_i,z_i\in {W_i}\bigcap {\|x_i\|}B_X$ such that $\|y_i-z_i\|>\varepsilon \|x_i\|.$
	 Consider $g=\sum_{i=1}^{n} y_i \chi_{A_i}$ and $h=\sum_{i=1}^{n} z_i \chi_{A_i}.$ 
	 From $\ref{1equ}$ and $\ref{2equ}$  we get $g,h\in V.$ Also $g,h\in B_{L^p(\mu,X)}.$ 
	 Indeed , $\|g\|_p^p=\sum_{i=1}^{n} \|y_i\|^p \mu({A_i})\leqslant \sum_{i=1}^{n} \|x_i\|^p \mu({A_i})=1.$ We argue simialry for $h.$ Finally, \\
	$dia (V\bigcap B_{L^p(\mu,X)})\geqslant \|g-h\|_p=\Big(\sum_{i=1}^{n} \|y_i-z_i\|^p \mu({A_i})\Big)^{1/p}>\Big(\sum_{i=1}^{n} \varepsilon^p \|x_i\|^p \mu({A_i})\Big)^{1/p}=\varepsilon$ , which is a contradiction.
\end{proof}

\begin{proposition} \label{bochner bdp}
	Let $(\Omega,\mathcal{A},\mu)$ be a finite measure space. Let $1\leqslant p<\infty.$ Then $L^p(\mu,X)$ has $BDP$ implies $X$ has $BDP.$
	  
\end{proposition}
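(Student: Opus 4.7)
The plan is to argue by contrapositive, paralleling the proof of Proposition \ref{bochner bhp} but with the weakly open neighbourhoods of the coefficients replaced by suitable \emph{slices} of $B_X$. Assume $X$ lacks $BDP$: fix $\varepsilon > 0$ so that every slice of $B_X$ has diameter greater than $\varepsilon$. Given an arbitrary slice $S = S(B_{L^p(\mu,X)}, \Phi, \alpha)$ with $\|\Phi\| = 1$, my goal is to exhibit $g, h \in S$ with $\|g-h\|_p > \varepsilon$; since $S$ was arbitrary, this shows $L^p(\mu,X)$ has no slice of diameter at most $\varepsilon$ and hence fails $BDP$.

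First I would pick, via density of simple functions and a rescaling, a simple $f_0 = \sum_{i=1}^n x_i \chi_{A_i} \in S$ with $\|f_0\|_p = 1$, each $x_i \neq 0$, and the $A_i$ pairwise disjoint. Put $u_i = x_i/\|x_i\|$, $\eta := \Phi(f_0) - (1-\alpha) > 0$, and $\phi_i(x) := \Phi(x \chi_{A_i}) \in X^*$, so that $\Phi(f_0) = \sum_i \|x_i\|\phi_i(u_i)$. For each $i$ with $\phi_i \neq 0$, set $\delta_i := \eta/(2n\|x_i\|)$ and consider
$$T_i := \{u \in B_X : \phi_i(u) > \phi_i(u_i) - \delta_i\};$$
since $\delta_i > 0$ and $\phi_i(u_i) \le \|\phi_i\|$, the threshold $\phi_i(u_i)-\delta_i$ sits strictly below the supremum $\|\phi_i\|$, so $T_i$ is a genuine slice of $B_X$ with $u_i \in T_i$ (when $\phi_i = 0$ take $T_i = B_X$ instead). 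The hypothesis then gives $\mathrm{dia}(T_i) > \varepsilon$, and I can pick $v_i, w_i \in T_i$ with $\|v_i - w_i\| > \varepsilon$.

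Setting $g := \sum_i \|x_i\| v_i \chi_{A_i}$ and $h := \sum_i \|x_i\| w_i \chi_{A_i}$, the norm estimate $\|g\|_p^p \le \sum_i \|x_i\|^p \mu(A_i) = 1$ (and similarly for $h$) places both in $B_{L^p(\mu,X)}$; the slack estimate $\Phi(g) > \sum_i \|x_i\|(\phi_i(u_i) - \delta_i) = \Phi(f_0) - \eta/2 > 1-\alpha$ (and similarly for $h$) places both in $S$; and
$$\|g - h\|_p^p = \sum_i \|x_i\|^p \|v_i - w_i\|^p \mu(A_i) > \varepsilon^p \sum_i \|x_i\|^p \mu(A_i) = \varepsilon^p.$$
The main obstacle is the two-sided demand on $\delta_i$: it must be positive so that $T_i$ is an honest slice of $B_X$ whose diameter the hypothesis forces above $\varepsilon$, yet small enough that the cumulative perturbation $\sum_i \|x_i\| \delta_i$ does not deplete the slack $\eta$ in the defining inequality of $S$. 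The choice $\delta_i = \eta/(2n\|x_i\|)$ meets both requirements simultaneously.
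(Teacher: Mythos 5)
Your proof is correct and takes essentially the same route as the paper's: both define the coefficient functionals $\phi_i(x)=\Phi(x\chi_{A_i})$, build slices of (a multiple of) $B_X$ around the coefficients $x_i$ with widths chosen so that their total cost stays below the slack $\Phi(f_0)-(1-\alpha)$, and perturb $f_0$ coordinatewise to produce two far-apart elements of the given slice. The only differences are cosmetic: you normalize to $u_i=x_i/\|x_i\|$ and work with slices of $B_X$ rather than of $\|x_i\|B_X$, and you phrase the argument as a contrapositive rather than a contradiction.
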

\begin{proof}
	Suppose $X$ does not have  $BDP.$ 
	Then there exists $\varepsilon >0$ such that any slice of $B_X$ has diameter $>\varepsilon.$ Since $L^p(\mu,X)$ has $BDP$ so there exists a slice $S(B_{L^p(\mu,X)},f^*,\alpha)$ of $B_{L^p(\mu,X)}$ with diameter $<\varepsilon.$
	%Here we use similar technique as in \cite{LP}.
	Choose $f\in S(B_{L^p(\mu,X)},f^*,\alpha)$ with $\|f\|_p=1.$ 
	Since simple functions are dense in $L^p(\mu,X)$ space , so without loss of generality we can assume that $f=\sum_{i=1}^{n} x_i \chi_{A_i}$ where $A_i(i=1,2,\ldots,n)$ are disjoint sets in $\mathcal{A}$ and $x_i(\neq 0) \in X$  $\forall i=1,2,\ldots,n.$ 
	For each $i=1,\ldots,n$ we define $x_i^*(x)=f^*(x\chi_{A_i}).$ Then clearly $x_i^*$ is linear and bounded.
	 Now for each i , consider the slice $S(\|x_i\|B_X, x_i^*, \|x_i^*\| \|x_i\| - x_i^*(x_i) +\alpha_i)$ 
%	 S_i
	$ =\{y\in \|x_i\|B_X : x_i^*(y)>x_i^*(x_i)-\alpha_i\}$ where $\alpha_i>0$ are such that $\sum_{i=1}^{n} \alpha_i \leqslant f^*(f)-(1-\alpha).$ 
%	Then the slice $S(\|x_i\|B_X,x_i^*,\alpha_i)$ of $\|x_i\|B_X$ is contained in $S_i.$ 
	Since any slice of $B_X$ has diameter $>\varepsilon$ so 
	$dia(S(\|x_i\|B_X, x_i^*, \|x_i^*\| \|x_i\| - x_i^*(x_i) +\alpha_i)) >\|x_i\| \varepsilon$ 
%	$diam(S(\|x_i\|B_X,x_i^*,\alpha_i))>\|x_i\| \varepsilon$ and hence $diam(S_i)>\|x_i\|\varepsilon.$ 
	Thus for each i , there exist $y_i,z_i\in S(\|x_i\|B_X, x_i^*, \|x_i^*\| \|x_i\| - x_i^*(x_i) +\alpha_i)$ such that $\|y_i-z_i\|>\|x_i\|\varepsilon.$ Define $g=\sum_{i=1}^{n} y_i \chi_{A_i}$ , $h=\sum_{i=1}^{n} z_i \chi_{A_i}.$ 
	$$\|g\|_p^p=\sum_{i=1}^{n} \|y_i\|^p \mu(A_i)\leqslant \sum_{i=1}^{n} \|x_i\|^p \mu(A_i)=1$$
	Similarly for h. Thus $g,h\in B_{L^p(\mu,X)}.$ Also \\
	$f^*(g)=f^*(\sum_{i=1}^{n} y_i \chi_{A_i})
	=\sum_{i=1}^{n} f^*(y_i \chi_{A_i})
	=\sum_{i=1}^{n} x_i^*(y_i)$
	$$>\sum_{i=1}^{n} (x_i^*(x_i)-\alpha_i)$$
	$$=\sum_{i=1}^{n} x_i^*(x_i)-\sum_{i=1}^{n} \alpha_i$$
	$$ = \sum_{i=1}^{n} f^*(x_i\chi_{A_i}) - \sum_{i=1}^{n} \alpha_i $$
	$$ = f^*(f)-\sum_{i=1}^{n} \alpha_i 
	\geqslant 1-\alpha$$
	Thus $g\in S(B_{L^p(\mu,X)},f^*,\alpha).$ Similarly $h \in S(B_{L^p(\mu,X)},f^*,\alpha).$\\
	Hence 
	$$ dia(S(B_{L^p(\mu,X)},f^*,\alpha))\geqslant \|g-h\|_p = \Big(\sum_{i=1}^{n} \|y_i-z_i\|^p \mu({A_i})\Big)^{1/p}>\Big(\sum_{i=1}^{n} \varepsilon^p \|x_i\|^p \mu({A_i})\Big)^{1/p}=\varepsilon$$  which is a contradiction.
\end{proof}

\begin{remark}
	We do not know whether the converse of the Propositions $\ref{bochner bhp}$, $\ref{bochner bdp}$ are true or not in general. However, converse of Proposition \ref{bochner bhp}, fails for $p=1$. Indeed, $\mathbb{R}$ has $BHP$ but $L^1(\mu ,\mathbb{R})$  where $\mu$ is Lebesgue measure on $[0,1]$ (which is simply denoted by $L^1[0,1]$) does not have $BHP.$ 
\end{remark}

%\begin{remark}
	%In case of big diameter property it was proved in \cite{ABGLP} that if $(\Omega,\mathcal{A},\mu)$ be a finite measure space. Then
	%$X$ has $D2P$ implies $L^p(\mu,X) $ has $D2P$ whenever $L^p(\mu)\neq {0}$ and $1\leqslant p<\infty.$  \\ and
	%in \cite{LP} it was proved that if $(\mathcal{S},\mathcal{A},\mu)$ be a complete , $\sigma$ finite measure space , $E$ a Kothe function space over %$(\mathcal{S},\mathcal{A},\mu)$ and $X$ a Banach space such that simple functions are dense in $E(X)$ then $X$ has $DD2P$ ($DLD2P$) implies $E(X)$ has $DD2P$ %($DLD2P$).
	%We use some technique from \cite{ABGLP} , \cite{LP} to achieve our result.
%\end{remark}

\section{Three Space Property}
We quote the following result from \cite{GGMS}
 \begin {proposition}\label{sr}
        \cite{GGMS}  $X$ is strongly regular if and only if every closed convex bounded subset $D$ of $X$ is the norm closure of its $SCS$ points. 
        \end{proposition}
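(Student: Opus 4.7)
\medskip

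\noindent The plan is to treat the two implications separately, with all the difficulty lying in the forward direction.

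\smallskip

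\noindent For the easy direction $(\Leftarrow)$, assume every closed bounded convex $D \subset X$ satisfies $D = \overline{SCS(D)}$. Given any such $D$ and any $\varepsilon > 0$, the set $SCS(D)$ is nonempty (being norm-dense in the nonempty set $D$); picking $x \in SCS(D)$ and applying the definition of an $SCS$ point produces a convex combination of slices of $D$ containing $x$ of diameter less than $\varepsilon$. Hence $X$ is strongly regular.

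\smallskip

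\noindent For the hard direction $(\Rightarrow)$, assume $X$ is strongly regular and fix a closed bounded convex $D \subset X$; the goal is $\overline{SCS(D)} = D$. I would argue by contradiction. If $\overline{SCS(D)} \subsetneq D$, pick $x_0 \in D \setminus \overline{\operatorname{conv}}(SCS(D))$ and apply Hahn--Banach separation to obtain a slice $S_0 = S(D, x^*, \alpha)$ of $D$ with $x_0 \in S_0$ and $S_0 \cap SCS(D) = \emptyset$. It then suffices to exhibit an $SCS$ point of $D$ inside $S_0$. The strategy is an inductive construction of a decreasing sequence of nonempty relatively weakly open subsets $V_1 \supset V_2 \supset \cdots$ of $D$ inside $S_0$, with $V_n$ contained in a convex combination of slices $T_n$ of $D$ of diameter less than $1/n$. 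Picking $y_n \in V_n$ yields a norm-Cauchy sequence whose limit $y \in D$ lies in each $\overline{T_n}$; after slightly enlarging each $T_n$ (increasing its defining widths by a small amount while keeping the diameter below $1/n$), one finds a CCS of $D$ of arbitrarily small diameter containing $y$, giving $y \in SCS(D) \cap S_0$ and the desired contradiction.

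\smallskip

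\noindent The inductive step is the main obstacle. Given the CCS $T_n$ of $D$ containing the relatively weakly open set $V_n$, one needs a smaller relatively weakly open $V_{n+1} \subset V_n$ and a CCS $T_{n+1}$ of $D$ of diameter less than $1/(n+1)$ containing $V_{n+1}$. Bourgain's lemma applied inside $V_n$ produces a CCS of $D$ contained in $V_n$, but its diameter is uncontrolled. Strong regularity, applied to the closure of that CCS, yields a smaller CCS of \emph{that} sub-convex-set, not of $D$. To promote a slice $S(\overline{T}, y^*, \beta)$ of the sub-convex-set to a genuine slice $S(D, y^*, \beta')$ of $D$ one must widen the defining width so that the thresholds match, and then use the small diameter of $\overline{T}$ to control the ``overflow'' of the widened slice outside $\overline{T}$. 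Iterating this slice-extension estimate across the convex combination, and then applying Bourgain once more inside the resulting small CCS to locate the new relatively weakly open $V_{n+1}$, closes the induction; this delicate construction is where the geometry of strong regularity is used in an essentially nontrivial way.
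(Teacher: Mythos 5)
The first thing to say is that the paper does not prove this proposition at all: it is quoted verbatim from \cite{GGMS}, where it is a substantial theorem of the memoir, so there is no in-paper argument to compare yours against. Judged on its own terms, your backward implication is correct and complete. The forward implication, however, breaks at the very first step. Negating the conclusion gives a point $x_0\in D\setminus\overline{SCS(D)}$, but Hahn--Banach separation requires $x_0\notin\overline{\operatorname{conv}}(SCS(D))$, and since $SCS(D)$ need not be convex this is a strictly stronger hypothesis than the one you actually have. As written, your contradiction scheme can therefore only establish $D=\overline{\operatorname{conv}}(SCS(D))$ (every nonempty slice of $D$ meets $SCS(D)$), not the norm density of $SCS(D)$ in $D$ that the proposition asserts. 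The distinction is not cosmetic: the paper's application (Proposition on three-space property for $BSCSP$) needs an $SCS$ point of $\bar A_i$ at norm distance less than $\delta_0$ from $0$, which is exactly density and does not follow from the convex-hull version. Passing from ``$SCS(D)$ meets every slice'' to ``$SCS(D)$ is norm dense'' is a genuine additional localization argument in \cite{GGMS}.

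Beyond that, two steps are asserted rather than proved. The inductive step --- converting a slice of the closed convex set $\overline{T_n}$ into something contained in a convex combination of slices of $D$ with controlled diameter --- is the technical heart of the whole theorem (the GGMS lemma on slices of convex combinations of slices); you correctly identify it as the main obstacle, but leaving it as a ``strategy'' means the argument is a program, not a proof. Separately, the claim that the norm limit $y\in\overline{T_n}$ lies in the enlarged set $\sum_i\lambda_i S(D,x_i^*,\alpha_i+\delta)$ is not automatic: from $y=\lim_m\sum_i\lambda_i s_i^m$ one cannot extract convergent components $s_i^m$ in $D$, and perturbing a single $s_i^m$ to absorb the error may leave $D$, since $D$ can have empty interior. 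So the inclusion $\overline{\sum_i\lambda_i S_i}\subseteq\sum_i\lambda_i S_i'$ needs its own proof. Given the depth of the result, the appropriate course here is to do what the paper does and cite \cite{GGMS} rather than reprove it.
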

\begin{proposition}
	If $X/Y$ is strongly regular and $Y$ has $BSCSP,$ then $X$ has $BSCSP.$
\end{proposition}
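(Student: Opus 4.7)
The plan is to construct, for each $\varepsilon>0$, a convex combination of slices of $B_X$ of diameter less than $\varepsilon$ by superimposing two slicings: a small-diameter CCS of $B_{X/Y}$ coming from the strong regularity of $X/Y$, and one of $B_Y$ coming from $BSCSP$ of $Y$, transported into $B_X$ via the quotient map and Hahn--Banach extension respectively.

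Fix $\varepsilon>0$. Strong regularity of $X/Y$ (or Proposition \ref{sr} applied to $B_{X/Y}$) yields a convex combination of slices $\sum_{i=1}^{n}\lambda_i S(B_{X/Y},\phi_i,\alpha_i)$ of diameter less than $\varepsilon/3$. Via the canonical isometry $(X/Y)^{*}\cong Y^{\perp}\subset X^{*}$, set $x_i^{*}:=\phi_i\circ\pi$, where $\pi\colon X\to X/Y$ is the quotient map. Since $x_i^{*}$ annihilates $Y$, the CCS $T_1:=\sum_{i}\lambda_i S(B_X,x_i^{*},\alpha_i)$ of $B_X$ satisfies $\pi(T_1)\subseteq\sum_{i}\lambda_i S(B_{X/Y},\phi_i,\alpha_i)$; consequently any two points of $T_1$ differ modulo $Y$ by at most $\varepsilon/3$. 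Equivalently, for $u,v\in T_1$ one may write $u-v=y+w$ with $y\in Y$, $\|y\|\le R:=2+\varepsilon/3$, and $\|w\|<\varepsilon/3$. This controls the quotient direction.

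Next, $BSCSP$ of $Y$ supplies a convex combination $T_Y:=\sum_{j=1}^{m}\mu_j S(B_Y,y_j^{*},\beta_j)$ of diameter less than $\varepsilon/(3R)$ in $B_Y$. Extend each $y_j^{*}\in S_{Y^{*}}$ to $\tilde y_j^{*}\in S_{X^{*}}$ by Hahn--Banach, so that the key identity $S(B_X,\tilde y_j^{*},\beta_j)\cap B_Y=S(B_Y,y_j^{*},\beta_j)$ holds. For each pair $(i,j)$ the intersection $W_{ij}:=S(B_X,x_i^{*},\alpha_i)\cap S(B_X,\tilde y_j^{*},\beta_j)$ is relatively weakly open in $B_X$, and Bourgain's lemma produces a convex combination of slices $C_{ij}$ of $B_X$ contained in $W_{ij}$ whenever $W_{ij}\ne\emptyset$. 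Form
\[
T\;:=\;\sum_{(i,j)}\lambda_i\mu_j\,C_{ij},
\]
renormalising coefficients if necessary; being a sum of convex combinations of slices, $T$ is itself a convex combination of slices of $B_X$. Step 1 bounds the quotient-direction diameter of $T$ by $\varepsilon/3$; the $\tilde y_j^{*}$-cuts, averaged over $j$ with weights $\mu_j$ and rescaled by $R$, are meant to force the $Y$-displacement $y$ of any $u-v\in T-T$ into a translate of $R\,T_Y$, giving $\|y\|<\varepsilon/3$ and hence $\operatorname{diam}(T)<\varepsilon$.

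The principal obstacle is the fibre-direction control: a single slice $S(B_X,\tilde y_j^{*},\beta_j)$ of $B_X$ is not localised in $Y$, so the averaging argument valid for $T_Y\subseteq B_Y$ must be transported along the quotient tube produced in Step 1. The natural route is to use the Hahn--Banach identity above together with Bourgain's lemma to confine the construction to the intersections $W_{ij}$, and then to use the quotient bound from Step 1 to bring the $Y$-component of $u-v$ back inside $RB_Y$ where the $BSCSP$-averaging of $T_Y$ applies after scaling. Calibrating the parameter $R$ against the diameter of $T_Y$, and verifying that sufficiently many $W_{ij}$'s are non-empty so that the renormalisation producing $T$ is legitimate, are the principal technical points; they parallel the Hahn--Banach-and-rescaling arguments employed in the $\ell_p$-sum stability proofs of Section~2.
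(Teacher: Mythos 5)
Your outline has the right ingredients (quotient functionals, Hahn--Banach extensions, Bourgain's lemma), but the two steps you yourself flag as ``principal technical points'' are not technicalities: one of them fails outright for the construction as you have set it up, and the other is the actual content of the theorem. First, the non-emptiness of $W_{ij}=S(B_X,x_i^{*},\alpha_i)\cap S(B_X,\tilde y_j^{*},\beta_j)$ genuinely breaks down. Any $x\in S(B_X,x_i^{*},\alpha_i)$ has $\|\pi(x)\|>1-\alpha_i$, i.e.\ $x$ is almost as far from $Y$ as possible, while $S(B_X,\tilde y_j^{*},\beta_j)$ tends to live near where $\tilde y_j^{*}$ (an extension of a functional normed on $B_Y$) attains its norm. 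In $X=\ell_2^2$ with $Y=\mathbb{R}\times\{0\}$, the slices $\{(t,s)\in B_X: s>1-\alpha\}$ and $\{(t,s)\in B_X: t>1-\beta\}$ are disjoint for small $\alpha,\beta$, so every $W_{ij}$ is empty and there is nothing to renormalise. The root cause is that strong regularity hands you a small convex combination of slices \emph{somewhere} in $B_{X/Y}$, with no control on its location; if it sits near the sphere of $X/Y$, its preimage tube is disjoint from any neighbourhood of $Y$, and the whole interaction with the $B_Y$-slicing is lost. Second, even where $W_{ij}\neq\emptyset$, your fibre-direction step is only ``meant to force'' $\|y\|<\varepsilon/3$: the diameter bound on $T_Y$ is a statement about points of $B_Y$, whereas the $Y$-component $y$ of $u-v$ for $u,v\in T$ is not a difference of points of $B_Y$ unless $u,v$ are already within a controlled distance of $Y$ --- which, again, your construction does not arrange.

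The missing idea, which is how the paper proceeds, is to apply strong regularity not to $B_{X/Y}$ at large but to the specific convex sets $A_i=\pi\bigl(S(B_X,x_i^{*},\tilde\varepsilon)\bigr)$, where the $x_i^{*}$ are Hahn--Banach extensions of the functionals defining the small convex combination of slices of $B_Y$. Since $0\in A_i$ and, by Proposition \ref{sr}, the SCS points of $\overline{A_i}$ are norm dense in $\overline{A_i}$, one can choose an SCS point $a_i$ of $\overline{A_i}$ with $\|a_i\|<\delta_0$. This single move fixes both gaps at once: the convex combination of slices of $B_{X/Y}$ witnessing $a_i$ meets $A_i$, so the pulled-back relatively weakly open sets $S(B_X,P^{*}(a_j^i)^{*},\eta_j^i)\cap S(B_X,x_i^{*},\tilde\varepsilon)$ are automatically non-empty; and because $\|a_i\|<\delta_0$, any point of $B_X$ selected from these sets has quotient norm less than $2\delta_0$, hence can be perturbed into $B_Y$, where it lands in $S(B_Y,y_i^{*},\tilde\varepsilon+2\delta_0)$ and the small diameter of $\sum_i\lambda_i S(B_Y,y_i^{*},\delta)$ yields the contradiction (the paper runs the argument contrapositively, assuming every convex combination of slices of $B_X$ has diameter greater than $\varepsilon$ and invoking Bourgain's lemma to transfer that lower bound to the convex combination of weakly open sets). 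Without relocating the strong-regularity step to the sets $A_i$, your Steps 1 and 2 do not interact, and the proof does not close.
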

\begin{proof}
	Suppose  $X$ does not have  $BSCSP.$
	 Then there exists $\varepsilon> 0$ such that diameter of any convex combination of slices of $B_X$ is greater than $\varepsilon.$
	  Since $Y$ has $BSCSP,$ for  $\epsilon > 0,$ there exists slices $S(B_Y,y_i^*,\delta)$ and $ 0 \leq \lambda_i\leq, \sum_{i-1}^{n}  \lambda_i =1$ such that  $dia(\sum_{i=1}^{n} \lambda_i S(B_Y,y_i^*,\delta))<\frac{\varepsilon}{2}.$ 
	  Choose $\tilde{\varepsilon},\delta_0> 0$ such that $\tilde{\varepsilon}+2 \delta_0< \delta,$ and $ 0<\delta_0<\frac{\varepsilon}{8}.$ By Hahn Banach theorem , we can extend $y_i^*$ to a norm preserving extension $x_i^*$ for all $i=1,2,\ldots,n.$
	 Let $P:X\rightarrow X/Y$ be the map such that $P(x)=x+Y.$ 
	 Now, $A_i=P(S(B_X,x_i^*,\tilde{\varepsilon}))$  is a convex subset of $B_{X/Y}$ , since $\|P\|\leqslant 1$ and $A_i$ also contains zero. 
	 Also, by  Proposition \ref{sr}, $\bar{A}_i$=$\ov{(SCS(\bar{A}_i))}$.
	   Thus $\forall i=1,2,\ldots,n,$  there exists $SCS$ point $a_i$  of $\bar{A}_i$  such that $\|a_i\|<\delta_0.$ 
	 Hence $a_i=\sum_{j=1}^{n_i} \gamma_j^i a_j^i \in \sum_{j=1}^{n_i}\gamma_j^i (S(B_{X/Y}, (a_j^i)^*,\eta_j^i)\cap\bar{A}_i$ with $dia(\sum_{j=1}^{n_i}\gamma_j^i (S(B_{X/Y}, (a_j^i)^*,\eta_j^i)\cap\bar{A}_i)<\delta_0$ for all $i=1,2,\ldots,n$ where $\sum_{j=1}^{n_i}\gamma_j^i =1$ with $\gamma_j^i >0$ $\forall j=1,\ldots n_i$, $(a_j^i)^* \in S_{{(X/Y)}^*}$ and $\eta_j^i >0$  for all $i=1,2,\ldots,n.$
	 %	Then $a_i=\sum_{j=1}^{n_i} \gamma_j^i a_j^i$ where $a_j^i \in S(B_{X/Y}, (a_j^i)^*,\eta_j^i)\cap\bar{A}_i$ and $diam(\sum_{j=1}^{n_i}\gamma_j^i (S(B_{X/Y}, (a_j^i)^*,\eta_j^i)\cap\bar{A}_i)<\delta_0$ for all $i=1,2,\ldots,n.$ 
	Now,  $S(B_{X/Y}, (a_j^i)^*,\eta_j^i)\cap\bar{A}_i \neq \emptyset$ implies $S(B_{X/Y}, (a_j^i)^*,\eta_j^i)\cap A_i \neq \emptyset.$
	Consequently, $S(B_{X}, P^*(a_j^i)^*,\eta_j^i)\cap S(B_X,x_i^*,\tilde{\varepsilon}) \neq \emptyset.$
	Indeed, let $P(z)\in S(B_{X/Y}, (a_j^i)^*,\eta_j^i)$ for some $z \in S(B_X,x_i^*,\tilde{\varepsilon}).$ hence 
$ P^*(a_j^i)^*(z)=(a_j^i)^*(P(z))>1 - \eta_j^i \geqslant  \sup_{w\in B_{X}} P^*(a_j^i)^* (w) - \eta_j^i.$ 	
	%	\sup_{w\in B_{X/Y}} (a_j^i)^*(w) 
%	Indeed, let $x+Y \in S(B_{X/Y}, (a_j^i)^*,\eta_j^i)\cap A_i$ $\Rightarrow (a_j^i)^*(x+Y)>1-\eta_j^i$ and $x+Y=\tilde{x}+Y$ where $x_i^*(\tilde{x})>1-\tilde{\varepsilon}$ $\Rightarrow \tilde{x} \in S(B_{X}, P^*(a_j^i)^*,\eta_j^i)\cap S(B_X,x_i^*,\tilde{\varepsilon}).$
	 Now, $ D =\sum_{i=1}^{n} \lambda_i \sum_{j=1}^{n_i}\gamma_j^i ((S(B_{X},P^*(a_j^i)^*,\eta_j^i)\cap S(B_X,x_i^*,\tilde{\varepsilon}))$ is a convex combination of nonempty relatively weakly open subset of $B_X.$
	 By Bourgain's lemma, $D$ contains a convex combination of slices of $B_X$ and since diameter of any convex combination of slices of $B_X$ is greater than $\varepsilon,$ so $dia \sum_{i=1}^{n} \lambda_i \sum_{j=1}^{n_i}\gamma_j^i ((S(B_{X},P^*(a_j^i)^*,\eta_j^i)\cap S(B_X,x_i^*,\tilde{\varepsilon}))>\varepsilon.$ 
	  Then there exists $x_j^i,z_j^i \in (S(B_{X},P^*(a_j^i)^*,\eta_j^i)\cap S(B_X,x_i^*,\tilde{\varepsilon})$ such that $\|\sum_{i=1}^{n} \lambda_i \sum_{j=1}^{n_i}\gamma_j^i x_j^i-\sum_{i=1}^{n} \lambda_i \sum_{j=1}^{n_i}\gamma_j^i   z_j^i \|>\varepsilon.$
	   Note that, $\sum_{j=1}^{n_i}\gamma_j^i x_j^i \in   \sum_{j=1}^{n_i}\gamma_j^i ((S(B_{X},P^*(a_j^i)^*,\eta_j^i)\cap S(B_X,x_i^*,\tilde{\varepsilon}))$
	$\Rightarrow P( \sum_{j=1}^{n_i}\gamma_j^i x_j^i) \in \sum_{j=1}^{n_i}\gamma_j^i (S(B_{X/Y}, (a_j^i)^*,\eta_j^i)\cap A_i ).$
	 Since $diam (\sum_{j=1}^{n_i}\gamma_j^i (S(B_{X/Y}, (a_j^i)^*,\eta_j^i)\cap A_i ) <\delta_0, \forall i=1,2,\ldots,n,$  we have ,\\
	 $\|P( \sum_{j=1}^{n_i}\gamma_j^i x_j^i) - \sum_{j=1}^{n_i}\gamma_j^i a_j^i\|<\delta_0.$
	$\Rightarrow \|P( \sum_{j=1}^{n_i}\gamma_j^i x_j^i)\|<\delta_0+\| \sum_{j=1}^{n_i}\gamma_j^i a_j^i\|$
	$<2\delta_0$ \quad $\forall i=1,2,\ldots,n$\\
	Thus, $d(\sum_{j=1}^{n_i}\gamma_j^i x_j^i,Y)<2\delta_0$
	$\Rightarrow$ for each $i=1,2,\ldots,n$ there exists $v_i\in B_Y$ such that $\|v_i - \sum_{j=1}^{n_i}\gamma_j^i x_j^i\|<2\delta_0.$
	Similarly , for each $i=1,2,\ldots,n$ there exists $w_i\in B_Y$ such that $\|w_i - \sum_{j=1}^{n_i}\gamma_j^i z_j^i\|<2\delta_0.$ 
	Now , $y_i^*(v_i)=y_i^*(\sum_{j=1}^{n_i}\gamma_j^i x_j^i)+y_i^*(v_i-\sum_{j=1}^{n_i}\gamma_j^i x_j^i)>1-\tilde{\varepsilon}-2\delta_0$\\
	Similarly , $ y_i^*(w_i)>1-\tilde{\varepsilon}-2\delta_0.$ Thus $v_i,w_i
	\in S(B_Y,y_i^*,\tilde{\varepsilon}+2\delta_0)$\\ 
	$dia(\sum_{i=1}^{n} \lambda_i  S(B_Y,y_i^*,\tilde{\varepsilon}+2\delta_0) \geqslant \|\sum_{i=1}^{n} \lambda_i v_i - \sum_{i=1}^{n} \lambda_i w_i\|$ 
	$$\geqslant \|\sum_{i=1}^{n} \lambda_i \sum_{j=1}^{n_i}\gamma_j^i x_j^i  - \sum_{i=1}^{n} \lambda_i \sum_{j=1}^{n_i}\gamma_j^i z_j^i  \|-\|\sum_{i=1}^{n} \lambda_i v_i-\sum_{i=1}^{n} \lambda_i \sum_{j=1}^{n_i}\gamma_j^i x_j^i  \|$$  $$-\|\sum_{i=1}^{n} \lambda_i w_i - \sum_{i=1}^{n} \lambda_i \sum_{j=1}^{n_i}\gamma_j^i z_j^i \|$$ 
	$$> \varepsilon - \sum_{i=1}^{n} \lambda_i \|v_i - \sum_{j=1}^{n_i}\gamma_j^i x_j^i   \|-\sum_{i=1}^{n} \lambda_i \|w_i- \sum_{j=1}^{n_i}\gamma_j^i z_j^i   \|$$
	$$>\varepsilon -2\delta_0-2\delta_0$$
	$$=\varepsilon-4\delta_0$$ 
	Since , $\sum_{i=1}^{n} \lambda_i  S(B_Y,y_i^*,\tilde{\varepsilon}+2\delta_0) \subset \sum_{i=1}^{n} \lambda_i  S(B_Y,y_i^*,\delta)$ 
	so $$dia( \sum_{i=1}^{n} \lambda_i  S(B_Y,y_i^*,\delta) \geqslant dia(\sum_{i=1}^{n} \lambda_i  S(B_Y,y_i^*,\tilde{\varepsilon}+2\delta_0))>\varepsilon-4\delta_0 >\varepsilon-\frac{\varepsilon}{2}=\frac{\varepsilon}{2},$$
	a contradiction.
\end{proof}

\begin{proposition}
	Let $Y$ be a closed subspace of $X$ such that $X/Y$ is finite dimensional and $Y$ has $BHP$, then $X$ has $BHP.$
\end{proposition}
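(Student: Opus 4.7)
The plan is to leverage the finite-dimensionality of $X/Y$ through the quotient map $\pi : X \to X/Y$, which becomes \emph{weak-to-norm continuous} once $X/Y$ is finite dimensional (since the weak and norm topologies coincide on a finite dimensional space). The idea is to lift a small relatively weakly open subset of $B_Y$ (given by $BHP$ of $Y$) to a relatively weakly open subset of $B_X$ by adjoining the weakly open condition $\|\pi(x)\|_{X/Y} < \delta$ for small $\delta$. This is a cleaner alternative to decomposing $X = Y \oplus F$ and working with the (possibly non-contractive) projection onto $Y$.

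Given $\varepsilon > 0$, I would first apply $BHP$ of $Y$ to obtain a basic relatively weakly open set
\[
W = \{y \in B_Y : |y_i^*(y - y_0)| < \eta,\ i=1,\ldots,k\}
\]
with $\operatorname{diam}(W) < \varepsilon/3$, where $y_0 \in W$ and $y_i^* \in S_{Y^*}$, and extend each $y_i^*$ to $x_i^* \in S_{X^*}$ by the Hahn--Banach theorem. Then, for a parameter $\delta > 0$ to be chosen below $\min(\eta/4,\, \varepsilon/6)$, I would set
\[
V = \bigl\{x \in B_X : |x_i^*(x - y_0)| < \eta/2,\ i=1,\ldots,k;\ \|\pi(x)\|_{X/Y} < \delta\bigr\}.
\]
Weak-to-norm continuity of $\pi$ makes $\{x : \|\pi(x)\| < \delta\}$ weakly open in $X$, so $V$ is relatively weakly open in $B_X$, and it contains $y_0$ (since $\pi(y_0) = 0$), hence is nonempty.

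For the diameter, given $x \in V$, the condition $\|\pi(x)\| < \delta = \operatorname{dist}(x,Y)$ produces $z \in Y$ with $\|x - z\| < \delta$, so $\|z\| < 1 + \delta$ and $\tilde z := z/(1+\delta) \in B_Y$. Combining $|x_i^*(x - y_0)| < \eta/2$ with $|x_i^*(x-z)| \leq \delta$ and using $x_i^*|_Y = y_i^*$, a short estimate gives
\[
|y_i^*(\tilde z - y_0)| \leq \frac{\eta/2 + 2\delta}{1+\delta} < \eta,
\]
so $\tilde z \in W$. Then for $x_1,x_2 \in V$ with corresponding $z_1,z_2$ and $\tilde z_1,\tilde z_2 \in W$,
\[
\|x_1 - x_2\| \leq 2\delta + (1+\delta)\|\tilde z_1 - \tilde z_2\| < 2\delta + (1+\delta)\,\varepsilon/3 < \varepsilon
\]
for $\delta$ small enough, establishing $BHP$ for $X$.

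The most delicate step is the verification that $\tilde z \in W$, which hinges on the scaling-by-$(1+\delta)^{-1}$ trick to return $z$ to $B_Y$ without losing the functional inequalities; this parallels the lifting technique in the preceding three-space result for $BSCSP$, but the role played there by strong regularity of $X/Y$ is here replaced by the much stronger fact that $X/Y$ is finite dimensional (making $\pi$ weak-to-norm continuous and obviating the need for $SCS$ arguments on the quotient).
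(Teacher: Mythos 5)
Your proof is correct and follows essentially the same route as the paper's: extend the functionals defining a small weakly open subset of $B_Y$ by Hahn--Banach, use finite-dimensionality of $X/Y$ to make the quotient map weak-to-norm continuous so that the constraint $\|\pi(x)\|<\delta$ is weakly open, and push points of the resulting set back into $B_Y$ by a small perturbation. The only differences are cosmetic: you argue directly rather than by contradiction, you obtain nonemptiness from $y_0\in V$ instead of invoking the Open Mapping Theorem, and your $(1+\delta)^{-1}$ rescaling makes the ``without loss of generality $u_i\in B_Y$'' step of the paper precise.
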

\begin{proof}
	%We use similar techniques as in \cite{BGLPRZ4}.\\
	Suppose $X$ does not have  $BHP.$ Then there exists $\varepsilon>0$ such that diameter of any relatively weakly open set in $B_X$ is $>\varepsilon.$ 
	Since $Y$ has $BHP$ , so let $W=\{y\in B_Y:|y_i^*(y-y_0)|<\varepsilon_i \quad \forall i=1,2,\ldots,n\}$ where $y_0\in B_Y$ is relatively weakly open set in $B_y$ with diameter $<\frac{\varepsilon}{2}.$ 
	%Here we use similar technique as in \cite{BGLPRZ4}.
	Now for each $i=1,2,\ldots,n$, we can find 
	$\tilde{\varepsilon_i}>0$
	and $\frac{\varepsilon}{4}>\delta_0>0$ such that 
	$\tilde{\varepsilon_i} + \delta_0\|y_i^*\|<\varepsilon_i.$ 
	By Hahn Banach theorem , we can extend $y_i^*$ to a norm preserving extension $x_i^*$ for all $i=1,2,\ldots,n.$ Define \\
	$U=\{x\in B_X: |x_i^*(x-y_0)<\tilde{\varepsilon_i} \quad \forall i=1,2,\ldots,n\}.$
	Clearly $U\neq\emptyset$ and $U$ is relatively weakly open in $B_X.$
	 Let $P:X\rightarrow X/Y$ be the map such that $P(x)=x+Y.$ Then clearly $P$ is onto and linear. 
	Also $P$ is open map by Open Mapping Theorem.
	 Thus $P(U)$ is a norm open set in $X/Y$ and $y_0\in U\cap Y.$ Thus $P(U)$ is a norm open set containing zero. 
	So, there exists $0<\delta<\frac{\delta_0}{2}$ such that $B(0,\delta)\subset P(U).$ 
	Put, $B=P^{-1}(B(0,\delta))\bigcap U \subset B_X.$ 
	Now using the fact that norm norm continuous implies weak weak continuous and $X/Y$ is finite dimensional we can conclude that $B$ is relatively  weakly open in $B_X.$ 
%	Since $P$ is norm norm continuous so it is weak weak continuous. 
%	Thus $B$ is weakly open in $B_X.$  
	Then $dia(B)>\varepsilon.$
	 Thus there exists $v_1,v_2\in B$ such that $\|v_1-v_2\|>\varepsilon.$ 
	Now, $\|P(v_1)\| <\delta 
	\Rightarrow d(v_1,Y)<\delta
	\Rightarrow \exists u_1\in Y \ such\ that \|u_1-v_1\|<\delta.$
	 Similarly for $v_2$ there exists $u_2\in Y$ such that $\|u_2-v_2\|<\delta.$ 
	 Without loss of generality we can assume that $u_1,u_2\in B_Y.$ Otherwise we will choose $\frac{u_1}{\|u_1\|}$ and $\frac{u_2}{\|u_2\|}.$ 
	 Now $\forall i=1,2,\ldots,n$ ,
	$|y_i^*(u_1-y_0)|\leqslant |y_i^*(u_1-v_1)|+|y_i^*(v_1-y_0)|\leqslant \|y_i^*\| 2 \delta + \tilde{\varepsilon_i} <\| y_i^*\| \delta_0 + \tilde{\varepsilon_i}<\varepsilon_i$ \\
	Thus, $u_1\in W.$ 
	Similarly $u_2\in W$ and \\$\|u_1-u_2\|\geqslant \|v_1-v_2\|+\|u_1-v_1\|+\|u_2-v_2\|$  $>\varepsilon-4\delta>
	\varepsilon-2\delta_0>\varepsilon-\frac{\varepsilon}{2}=\frac{\varepsilon}{2}.$ \\ 
	Thus , $dia(W)>\frac{\varepsilon}{2}$ , a contradiction.
\end{proof}

%\begin{remark}
	%In case of big diameter property it was proved in \cite{BGLPRZ4} that
	%if $X$ has $D2P$ and $X/Y$ is finite dimensional then $Y$ has $D2P$ and
	%if $X$ has $SD2P$ and $X/Y$ is strongly regular then $Y$ has $SD2P.$
	%Bearing in mind the ideas of these proofs we achieve our result.  
	%  We use some technique from \cite{BGLPRZ4} to achieve our result.
%\end{remark}  

\begin{Acknowledgement}
The second  author's research is funded by the National Board for Higher Mathematics (NBHM), Department of Atomic Energy (DAE), Government of India, Ref No: 0203/11/2019-R$\&$D-II/9249.
 %The authors are grateful to  Professor T.S.S.R.K Rao and Professor G.\ L$\acute{o}$pez P$\acute{e}$rez for many valuable suggestions. 
\end{Acknowledgement}


\begin{thebibliography}{99}
\small
%\addcontentsline{toc}{chapter}{\protect\numberline{}}

%\bibitem[A]{A} M.\ D.\ Acosta, A.\ Kaminska, M.\ Mastylo; {\it
%The Daugavet property in rearrangement invariant spaces},  Trans.
%Amer. Math. Soc. {\bf 367} 4061-4078 (2015).
%\bibitem[ABGLP]{ABGLP} M.\ D.\ Acosta, J.\ Becerra Guerrero, G.\ L$\acute{o}$pez P$\acute{e}$rez; {\it Stability results of diameter 2 properties},  J. Conv. Anal. {\bf 22} (1) 1-17 (2015). 
\bibitem[ALN]{ALN} T.\ Abrahamsen, V.\ Lima, O.\ Nygaard; {\it Remarks on diameter 2 properties}, J.Conv. Anal. {\bf 20} (1) 439-452 (2013).
\bibitem[B1]{B1} R. \ D. \ Bourgin; {\it Geometric aspects of convex sets with the Radon-Nikodym property}, Lecture Notes in Mathematics Springer-Verlag  Berlin {\bf 993} (1983).
 \bibitem[B2]{B2} S.\ Basu; {\it On Ball dentable property  in Banach Spaces},  Math.\ Analysis and its Applications in Modeling (ICMAAM 2018) Springer Proceedings in Mathematics and Statistics {\bf 302}, 145-149 (2020). 
 \bibitem[B3]{B3} J. \ Bourgain; {\it La propriété de Radon-Nikodym}, Publ. Mathé. de l'Université Pierre et Marie Curie {\bf 36} (1979).
\bibitem[BGLPRZ1]{BGLPRZ1} J.\ Becerra Guerrero, G.\ L$\acute{o}$pez P$\acute{e}$rez, A.\ Rueda Zoca; {\it Big Slices versus big relatively weakly open subsets in Banach spaces},  J.Math. Anal. Appl. {\bf 428} (2) 855-865 (2015).
\bibitem[BGLPRZ2]{BGLPRZ2} J.\ Becerra Guerrero, G.\ L$\acute{o}$pez P$\acute{e}$rez, A.\ Rueda Zoca; {\it Extreme differences between weakly open subsets and convex combination of slices in Banach spaces},  Adv. Math. {\bf 269}  56-70  (2015).
\bibitem[BGLPRZ3]{BGLPRZ3} J.\ Becerra Guerrero, G.\ L$\acute{o}$pez P$\acute{e}$rez, A.\ Rueda Zoca; {\it Octahedral norms and convex combination of slices in Banach spaces}, J.Funct.Anal. {\bf 266} (4) 2424-2435 (2014).
%\bibitem[BGLPRZ5]{BGLPRZ5} J.\ Becerra Guerrero, G.\ L$\acute{o}$pez P$\acute{e}$rez, A.\ Rueda Zoca; {\it Diametral diameter two properties in Banach spaces}, J. Conv. Anal. {\bf 25} 817-840 (2018) .
%\\  arXiv:1509.02061v4 [math.FA] 15 Apr 2016
%\bibitem[BGLPRZ4]{BGLPRZ4} J.\ Becerra Guerrero, G.\ L$\acute{o}$pez P$\acute{e}$rez, A.\ Rueda Zoca; {\it  Subspaces of Banach spaces with big slices}, Banach J. Math. Anal. {\bf 10} 771-782 (2016).
%\\  arXiv:1410.4324v1 [math.FA] 16 Oct 2014
%\bibitem[BGM1]{BGM1} J.\ Becerra Guerrero, M.\ Martin; {\it The Daugavet property of $C^*$ algebras,$JB^*$- triples,  and of their isometric preduals},  J.Funct.Anal. {\bf 224}  316-337  (2005).
%\bibitem[BGM2]{BGM2} J.\ Becerra Guerrero, M.\ Martin; {\it The Daugavet property for Lindenstrauss spaces, Methods in Banach space theory}, London Math. Soc. Lecture Note Ser. Cambridge Univ. Press  Cambridge  {\bf 337} (2006).
\bibitem[BR]{BR} S.\ Basu, T.\ S.\ S.\ R.\ K.\ Rao; { \it On Small Combination of slices in Banach Spaces}, Extracta Mathematica  {\bf 31} 1-10 (2016).
%\bibitem [BR1]{BR1} J. \ Bourgain, H.\ P. \ Rosenthal;
%{\it Geometrical implications of certain finite dimensional decompositions}, Bull. Soc. Math. Belg.,{\bf 32} 57-82 (1980).
\bibitem[DU]{DU} J .\ Diestel, J. \ J. \ Uhl; {\it Vector Measures}, Amer. Math. Soc. {\bf 15} (1977).
%\bibitem [E]{E} E.\ Oja; {\it On $M$-ideals of compact operators in Lorentz sequence spaces}, Journal of Mathematical Analysis and Application {\bf 259} 439-452 (2001).
\bibitem [EW]{EW} G \.A  \ Edgar, R. \  Wheeler; { \it Topological properties of Banach spaces}, Pacific J. Math., {\bf 115} 317-350 (1984).
%\bibitem[FHHMZ]{FHHMZ} M.\ Fabian, P.\ Habala, P.\ H$\acute{a}$jek, V.\ Montesinos, and V.\ Zizler; {\it Banach Space Theory. The Basis for Linear and Nonlinear Analysis}, Springer  Berlin (2011).
\bibitem [GGMS]{GGMS} N.\ Ghoussoub , G.\ Godefroy , B.\ Maurey, W.\ Scachermayer; { \it Some topological and geometrical structures in Banach spcaes}, Mem. Amer. Math. Soc. {\bf 70}  378  (1987).
%\bibitem [GKS]{GKS} G.\ Godefroy, N.\ J.\  Kalton, P.\ D.\ Saphar;  {\it Unconditional ideals in Banach spaces}, Studia Math. {\bf 104}  13-59  (1993).
\bibitem [GM]{GM} N. \ Ghoussoub, B. \ Maurey;  {\it $G_\delta$ Embeddings in Hilbert Space}, J. \ Funct.\ Anal. {\bf 61} (1) 72-97 (1985). 
\bibitem [GMS]{GMS} N. \ Ghoussoub, B. \ Maurey, W. \  Schachermayer; {\it Geometrical implications of
certain infinite-dimensional decomposition}, Trans. Amer. Math. Soc.  {\bf 317}, 541–584 (1990).
\bibitem [HWW]{HWW} P.\ Harmand, D.\ Werner and W.\ Werner; {\it $M$-ideals in Banach spacesand Banach algebras}, Lecture Notes in Mathematics,  Springer-Verlag, Berlin. {\bf 1547} (1993) . 
%\bibitem[L]{L} H. E. Lacey; {\it The isometric theory of classical Banach
%spaces}, Die Grundlehren der mathematischen Wissenschaften  Springer-Verlag, New York-Heidelberg  {\bf 208}  (1974).
\bibitem[L1]{L1} J.\ Langemets; {\it Geometrical structure in diameter 2 Banach spaces}, Dissertationes Mathematicae Universitatis Tartuensis  {\bf 99} (2015).
%\bibitem[L2]{L2} J.\ Langemets; {\it Diameter 2 properties }, Master's thesis  University of Tartu (2012), https://hdl.handle.net/10062/46466 
%\bibitem[LP]{LP} J.\ Langemets, K.\ Pirk; {\it Stability of diametral diameter two properties}, Revista de la Real Academia de ciencias Exactas, F$\acute{i}$sicas y Naturales. Series A. Matem$\acute{a}$ticas {\bf 115} (2021)  
%\bibitem[LLT]{LLT} B.\ L.\ Lin, P.\ K.\ Lin, S.\ Troyanski; {\it Charecterisation of denting points}, Proc. Amer. Math. Soc. { \bf 102} 526-528 (1988)  .
%\bibitem[P]{P} R.\ R.\ Phelps; {\it Convex Functions , Monotone Operators and Differentiability }, Springer-Verlag LNM  {\bf 1364}  (1989) .
\bibitem[R]{R} H. \ P.\ Rosenthal; {\it On the structure of non-dentable closed bounded convex sets}, Adv. in Math.  { \bf 70} 1-58 (1988) .
%\bibitem[R1]{R1}  T.\ S.\ S.\ R.\ K.\ Rao; {\it On ideals in Banach spaces}, Rocky Mountain J. Math.  {\bf 31}  595-609  (2001)  .
\bibitem[S]{S} W.\ Schachermayer;  {\it The Radon Nikodym Property and the Krein-Milman Property are equivalent for strongly regular sets}, Trans. Amer. Math. Soc. {\bf 303} (2) 673-687 (1987)   .
%\bibitem[W]{W}  D .\ Werner; {\it M-ideals and the "basic inequality"}, J . Approx . Theory {\bf 76} 21-30 (1994).   

%\bibitem[HL]{HL} Z.\ Hu and B. \ L. \ Lin, {\it RNP and CPCP in Lebesgue -Bochner Function Spaces}, Illinois Journal of

%\bibitem[CL]{CL} D.\ Chen and B.\ L.\ Lin, {\it Ball Topology of Banach spaces}, Houston J.\ Math., {\bf 22} (1996) 821--833.



\end{thebibliography}
\end{document}